\newcommand*{\affaddr}[1]{#1}
\newcommand*{\affmark}[1][*]{\textsuperscript{#1}}
\newtheorem{theorem}{Theorem}
\theoremstyle{theorem}
\newtheorem{lemma}[theorem]{Lemma}
\numberwithin{equation}{section}
\begin{document}
\title[On the Convergence of Random Fourier--Jacobi Series in $L_{[-1,1]}^{p,(\eta,\tau)}$ space]
{ On the Convergence of Random Fourier--Jacobi Series in $L_{[-1,1]}^{p,(\eta,\tau)}$ space}
\author[P. Maharana, S. Sahoo]
{P\lowercase{artiswari} M\lowercase{aharana}\affmark[1] \lowercase{and}
S\lowercase{abita} S\lowercase{ahoo}\affmark[2]\\
\affaddr{\affmark[1] D\lowercase{epartment} \lowercase{of} M\lowercase{athematics}, S\lowercase{ambalpur} U\lowercase{niversity}, O\lowercase{disha}, I\lowercase{ndia}}\\
\affaddr{\affmark[2] D\lowercase{epartment} \lowercase{of} M\lowercase{athematics}, S\lowercase{ambalpur} U\lowercase{niversity}, O\lowercase{disha}, I\lowercase{ndia}.
}\\
{\affmark[1] \lowercase{partiswarimath1@suniv.ac.in} }\\
{\affmark[2] \lowercase{sabitamath@suniv.ac.in} } \\
 }
\begin{abstract}
Liu and Liu introduced the random Fourier transform, which is a random Fourier series in Hermite functions, and applied it to image encryption and decryption.
They expected its applications in optics and information technology. These motivated us to look into random Fourier series in orthogonal polynomials.
Recently, we have established the convergence of random Fourier--Jacobi series
$\sum\limits_{n=0}^\infty d_n r_n(\omega)\varphi_n(y),$
where $\varphi_n(y)$ are the orthonormal
Jacobi polynomials $p_n^{(\gamma,\delta)}(y),$ $r_n(\omega)$ are random variables
associated with stochastic processes like the Wiener process, the symmetric stable process, and the scalars $d_n$ are the Fourier--Jacobi coefficients of functions in some classes of continuous functions.
It is observed that the mode of convergence of the random series depends on the choice of the scalars $d_n$
and the stochastic processes.
In this article, we have investigated the scalars $d_n,$ which are chosen to be the Fourier--Jacobi coefficients of functions
in some weighted $L_{[-1,1]}^{p,(\eta,\tau )}$ spaces, so that the random series converges.
Further, the continuity property of the sum functions is studied.

{\bf 2020 MSC Classification}-: 60G99, 40G15.

{\bf Key words}: Almost sure convergence, Convergence in quadratic mean, Convergence in probability, Random variables, Stochastic integral, Symmetric stable process.
\end{abstract}
\maketitle{}
\section{\textbf{Introduction}}
\setcounter{equation}{0}
The Fourier series was invented in the early 1800s to solve the problem of heat diffusion in a continuous medium.
It was extended to Fourier series in orthogonal polynomials which has a role in mathematical physics \cite{CM}, \cite{MM}, etc.
Later on, it was extended to random Fourier series, which is an inherent part of signal processing and image processing.
In 2007, Liu and Liu \cite{LL,LL1} attempted to define a random Fourier transform in orthogonal Hermite functions,
which is a Fourier--Hermite series with random coefficients.
The random Fourier transform they introduced is
\begin{equation*}
\mathcal{R}[f(y)]:=\sum_{n=0}^\infty d_n \mathcal{R}(\lambda_n)\varphi_n(y),
\end{equation*}
where $\varphi_n(y)$ are orthogonal Hermite functions,
$\mathcal{R}(\lambda_n):=\exp[i\pi\; \mathrm{Random(n)}]$ are randomly chosen values from the unit circle in $\mathbb{C},$
and $d_n$ are the Fourier-Hermite coefficients of some function $f$ in $L^2(\mathbb{R})$ i.e.
\begin{equation*}
d_n:=\int_{-\infty}^\infty f(y)\varphi_n(y) dy.
\end{equation*}
They applied it to image encryption and decryption. They also expected its application in optics and information technology.
It motivated us to look into random Fourier series in orthogonal polynomials.
Recently \cite{PS}, we studied the convergence of random Fourier series
\begin{equation}\label{0.2}
\sum_{n=0}^\infty d_n r_n(\omega)\varphi_n(y)
\end{equation}
in orthogonal Jacobi polynomials $\varphi_n(y).$
The scalars $d_n$ are the Fourier--Jacobi coefficients of a function $f$ in some class of continuous functions and the random variables $r_n(\omega)$
are the Fourier--Jacobi coefficients of a stochastic process.
The convergence of the random Fourier series (\ref{0.2}) is explored in this article by choosing the scalars $d_n$ which are Fourier--Jacobi coefficients of functions in some weighted $L^p$ spaces.

It is known that,
if $X(t,\omega),\;t\in \mathbb{R}$ is a continuous stochastic process with independent increments and $f$ is a continuous function in $[a,b],$ then the stochastic integral
\begin{equation}\label{1.1}
\int\limits_a^bf(t)dX(t,\omega)
\end{equation}
is defined in the sense of probability, and is a random variable (Lukacs \cite[p.~148]{22}).
Further, if $X(t,\omega),\; t \in \mathbb{R}$ is a symmetric stable process of index $\alpha \in[1,2]$ and $f \in L_{[a,b]}^p,$ $p\geq 1,$ then the stochastic integral (\ref{1.1}) is defined in the sense of probability, for $p\geq \alpha$ (c.f. \cite{NPM}).

Consider the space $L_{[-1,1]}^{p,(\eta,\tau)}$ of all measurable functions $f$ on the segment $[-1,1]$  with
the weight function $\rho^{(\eta,\tau)}(y):=(1-y)^\eta(1+y)^\tau ,\;\;\eta,\tau>-1,$ such
that
\begin{equation*}
\int_{-1}^1 |f(y)\rho^{(\eta,\tau)}(y)|^p dy < \infty.
\end{equation*}
This space is equipped with norm
\begin{equation*}
||f||_{L_{[-1,1]}^{p,(\eta,\tau)}}=\Big\{\int_{-1}^1 |f(y) \rho^{(\eta,\tau)}(y)|^p dy\Big\}^{\frac{1}{p}},\; for \; p \geq 1.
\end{equation*}
If $f\in L_{[-1,1]}^{p,(\eta,\tau)},\;p \geq 1,$ i.e. $f\rho^{(\eta,\tau)} \in L_{[-1,1]}^p,\;\eta,\tau >-1,$ then the stochastic integral
\begin{equation*}
\int\limits_{-1}^1 f(t)\rho^{(\eta,\tau)}(t)dX(t,\omega)
\end{equation*}
will exist in probability, for $p \geq \alpha \geq 1 .$

In particular, if $f(t)$ is the orthonormal Jacobi polynomial $p_n^{(\gamma,\delta)}(t)$ with respect to Jacobi weight $\rho^{(\gamma,\delta)}(t):={(1-t)}^\gamma{(1+t)}^\delta$ on $[-1,1],$
then the stochastic integrals
\begin{equation}\label{2.2}
A_n(\omega):=\int\limits_{-1}^1
p_n^{(\gamma,\delta )}(t)\rho^{(\eta,\tau)}(t)dX(t,\omega)
\end{equation}
exist, for $\gamma,\delta>-1,\eta,\tau\geq 0,$ and are random variables. These $A_n(\omega)$ are called the Fourier--Jacobi coefficients of the symmetric stable process $X(t,\omega)$ and are not independent (see Theorem \ref{L1.1}). But, they are independent, if these are associated with the Wiener process (see Theorem \ref{L2.2}).
In our work, the random coefficients $r_n(\omega)$ in the series (\ref{0.2})
are considered to be these $A_n(\omega),$ which are dependent or independent depending on the choice of the stochastic process $X(t,\omega)$ as the symmetric stable process or the Wiener process respectively.
The scalars $d_n$ are chosen to be the Fourier--Jacobi coefficients $a_n$ of a function $f$ in $L_{[-1,1]}^{p,(\eta,\tau)}$ defined as
\begin{equation}\label{1.3}
a_n:=\int_{-1}^1 f(t) p_n^{(\gamma,\delta)}(t)\rho^{(\gamma,\delta)}(t)dt,\; \gamma,\delta>-1.
\end{equation}
A key insight of this paper is to
 study the convergence of the random series (\ref{0.2})
in Jacobi polynomials associated with stochastic processes like the symmetric stable process and the Wiener process.
We also find the conditions on $\eta,\tau,\gamma,\delta$ as well as investigate
 the weighted spaces $L_{[-1,1]}^{p,(\eta,\tau)},$ so that the
  Fourier--Jacobi coefficients $a_n$ of its functions makes the random series (\ref{0.2}) to
converge. Further, the continuity property of the sum functions is also studied.

This article is structured as follows:\\
Section 2 establishes the convergence of random Fourier--Jacobi series (\ref{0.2}) associated with symmetric stable process $X(t,\omega)$ of index $\alpha \in (1,2],$ as well as for $\alpha=1.$ Section 3 is devoted to study the random Fourier--Jacobi series (\ref{0.2}) associated with the Wiener process.
In this case, the orthogonal polynomials $\varphi_n(y)$ are considered to be the modified Jacobi polynomials $q_n^{(\gamma,\delta)}(y)$ (see equation (\ref{0.4})) in the segment $[0,1].$
It is shown that the random Fourier--Jacobi series (\ref{0.2}) converges in quadratic mean. Moreover,
under a strong condition on the scalars $d_n,$
the convergence
is upgraded to almost sure convergence.
The sum functions of the random Fourier--Jacobi series (\ref{0.2}) associated with stochastic processes are found to be stochastic integrals.
The continuity property of the sum functions is proved in Section 4. It is obtained that the sum functions of the random series (\ref{0.2}) associated with the symmetric stable process and the Wiener process are weakly continuous in probability and continuous in quadratic mean, respectively. Further, the almost sure continuity of the sum function associated with the Wiener process is established.
\section{\textbf{Random Fourier--Jacobi series associated with symmetric stable process}}
This section deals with the random series
\begin{equation}\label{2.1}
 \sum_{n=0}^\infty a_nA_n p_n^{(\gamma,\delta)}(y)
\end{equation}
in orthonormal Jacobi polynomials $p_n^{(\gamma,\delta)}(y),$ $\gamma,\delta>-1$ with random coefficients $A_n(\omega)$
 associated with the symmetric stable process $X(t,\omega)$ of index $\alpha \in[1,2]$ defined as in (\ref{2.2}).
The following lemma establishes that the random coefficients $A_n(\omega)$ are not independent.
\begin{theorem}\label{L1.1}
If $X(t,\omega),t \in \mathbb{R}$ is a symmetric stable process of index $\alpha \in [1,2],$ then the random variables $A_n(\omega)$ associated with $X(t,\omega)$ are not independent, for $\gamma,\delta>-1$ and $\eta,\tau \geq 0.$
\end{theorem}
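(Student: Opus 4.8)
The plan is to decide (in)dependence through the joint characteristic function, since for $\alpha<2$ the variables $A_n$ have infinite variance and covariance-based reasoning is unavailable. Abbreviate $f_k(t):=p_k^{(\gamma,\delta)}(t)\,\rho^{(\eta,\tau)}(t)$, so that $A_k(\omega)=\int_{-1}^1 f_k(t)\,dX(t,\omega)$ by (\ref{2.2}). First I would record the defining spectral property of a symmetric $\alpha$-stable process: for any admissible integrand $g$ the integral is again symmetric $\alpha$-stable, with
\[
E\Big[\exp\Big(iu\!\int_{-1}^1 g(t)\,dX(t,\omega)\Big)\Big]=\exp\Big(-c\,|u|^{\alpha}\!\int_{-1}^1 |g(t)|^{\alpha}\,dt\Big)
\]
for a fixed constant $c>0$. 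Because the stochastic integral (\ref{2.2}) is linear in its integrand, $uA_m+vA_n=\int_{-1}^1(uf_m+vf_n)\,dX$ for real $u,v$, so the joint characteristic function of $(A_m,A_n)$ is
\[
\Phi(u,v)=\exp\Big(-c\!\int_{-1}^1 |uf_m(t)+vf_n(t)|^{\alpha}\,dt\Big).
\]

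Next I would convert independence into an analytic identity: $A_m$ and $A_n$ are independent exactly when $\Phi(u,v)=\Phi(u,0)\,\Phi(0,v)$ for all $u,v\in\mathbb{R}$, equivalently (after taking logarithms) when
\[
\int_{-1}^1 |uf_m+vf_n|^{\alpha}\,dt=|u|^{\alpha}\!\int_{-1}^1 |f_m|^{\alpha}\,dt+|v|^{\alpha}\!\int_{-1}^1 |f_n|^{\alpha}\,dt\qquad(u,v\in\mathbb{R}).
\]
It then suffices to produce a single pair $m\neq n$ violating this identity, since dependence of one pair already prevents the family $\{A_n\}$ from being independent. The structural fact that drives the violation is that $\rho^{(\eta,\tau)}(t)=(1-t)^{\eta}(1+t)^{\tau}>0$ on $(-1,1)$ while each $p_k^{(\gamma,\delta)}$ has only finitely many zeros there; therefore $f_m f_n=p_m^{(\gamma,\delta)}p_n^{(\gamma,\delta)}\big(\rho^{(\eta,\tau)}\big)^2$ is nonzero on a set of full measure, i.e. the integrands overlap on a set of positive measure. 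This overlap is precisely the obstruction to the additive identity above.

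To turn the overlap into a genuine failure of the identity I would argue by cases in $\alpha$. For $1<\alpha<2$ the map $x\mapsto|x|^{\alpha}$ is $C^1$, so differentiating the identity in $u$ and setting $u=0$ is legitimate and forces the necessary condition $\int_{-1}^1 |f_n|^{\alpha-1}\operatorname{sgn}(f_n)\,f_m\,dt=0$; taking $m=0$ reduces this to the explicit integral $\int_{-1}^1 |p_n^{(\gamma,\delta)}|^{\alpha-1}\operatorname{sgn}(p_n^{(\gamma,\delta)})\,\big(\rho^{(\eta,\tau)}\big)^{\alpha}\,dt$, which does not vanish for suitable $n$. For $\alpha=1$, the identity at $u=v=1$ reads $\int_{-1}^1 |f_m+f_n|\,dt=\int_{-1}^1 |f_m|\,dt+\int_{-1}^1 |f_n|\,dt$, and the equality case of the triangle inequality would require $f_m f_n\ge 0$ a.e.; choosing $(m,n)=(0,1)$, where $p_0 p_1$ changes sign, breaks this. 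For the Gaussian endpoint $\alpha=2$ the identity collapses to the vanishing of the covariance $\int_{-1}^1 f_m f_n\,dt=\int_{-1}^1 p_m^{(\gamma,\delta)}p_n^{(\gamma,\delta)}\big(\rho^{(\eta,\tau)}\big)^2\,dt$, which does not vanish because the $p_k^{(\gamma,\delta)}$ are orthogonal against the Jacobi weight $\rho^{(\gamma,\delta)}$ rather than against the squared weight $(\rho^{(\eta,\tau)})^2=\rho^{(2\eta,2\tau)}$. I expect the main obstacle to be exactly this last verification in each regime: ruling out an accidental cancellation by proving the relevant weighted integral of Jacobi polynomials is nonzero, with the non-differentiability of $|x|$ at the origin forcing the separate triangle-inequality treatment when $\alpha=1$.
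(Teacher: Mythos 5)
Your overall route is the same as the paper's: both of you reduce independence to factorization of the stable characteristic function $\exp\big(-c\int_{-1}^{1}|uf_m(t)+vf_n(t)|^{\alpha}\,dt\big)$ and are then left with showing that the additivity identity $\int|uf_m+vf_n|^{\alpha}\,dt=|u|^{\alpha}\int|f_m|^{\alpha}\,dt+|v|^{\alpha}\int|f_n|^{\alpha}\,dt$ fails. The paper works only on the diagonal $u=v$ and dismisses the failure as ``clearly not equal''; you correctly recognize that this inequality is the entire content of the theorem and try to prove it. Your $\alpha=1$ case is complete and correct: equality in $\int|f_0+f_1|=\int|f_0|+\int|f_1|$ forces $f_0f_1\ge 0$ a.e., which fails because $p_1^{(\gamma,\delta)}$ changes sign while $\rho^{(\eta,\tau)}>0$ on $(-1,1)$.

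The other two cases have genuine gaps. For $1<\alpha<2$, the first-order necessary condition $\int_{-1}^{1}|f_n|^{\alpha-1}\operatorname{sgn}(f_n)\,f_0\,dt=0$ that you extract can in fact hold: take $\gamma=\delta$, $\eta=\tau$ and $n$ odd, so that $f_n$ is odd, $|f_n|^{\alpha-1}\operatorname{sgn}(f_n)$ is odd, $f_0$ is even, and the integral vanishes by parity. So your test is not merely unverified (``does not vanish for suitable $n$'' is asserted, not proved) — it is insufficient as stated, since the pair can pass the first-order test while still being dependent. The clean repair is the classical characterization you already gestured at but did not invoke: for $0<\alpha<2$, two jointly S$\alpha$S variables $\int f\,dX$ and $\int g\,dX$ are independent if and only if $fg=0$ a.e.\ (Schilder; Samorodnitsky--Taqqu, Theorem 3.5.3). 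Since $f_mf_n=p_m^{(\gamma,\delta)}p_n^{(\gamma,\delta)}\big(\rho^{(\eta,\tau)}\big)^2\neq 0$ a.e., dependence follows at once for every pair and every $\alpha\in[1,2)$, with no case analysis. For $\alpha=2$ your nonvanishing claim is actually false in a nonempty parameter range: if $\gamma=2\eta\ge 0$ and $\delta=2\tau\ge 0$, then $\big(\rho^{(\eta,\tau)}\big)^2=\rho^{(\gamma,\delta)}$, every covariance $\int p_m^{(\gamma,\delta)}p_n^{(\gamma,\delta)}\rho^{(\gamma,\delta)}\,dt$ vanishes, and the jointly Gaussian $A_n$ are genuinely independent — so the theorem as stated fails at $\alpha=2$ for those weights, a defect that the paper's ``clearly not equal'' also conceals. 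Away from that degenerate choice you still owe an argument that some covariance is nonzero; uniqueness of the orthogonality measure for the Jacobi system supplies it, since otherwise $\rho^{(2\eta,2\tau)}$ would have to be a constant multiple of $\rho^{(\gamma,\delta)}$.
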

\begin{proof}
 To prove $A_n(\omega)$ are not independent, it is sufficient to show that the characteristic function of $\Big(A_n(\omega)+A_m(\omega)\Big)$ is not same as the product of the characteristic functions of $A_n(\omega)$ and $A_m(\omega).$ It is known that the characteristic function of the stochastic integral $\int\limits_{-1}^1 f(t)dX(t,\omega)$ is
\begin{equation}
 \exp\Big(-C|x|^\alpha\int_{-1}^1|f(t)|^\alpha dt\Big),
\end{equation}
  where $C$ is a constant.
So the characteristic function of $A_n(\omega)$ is
\begin{equation*}
\exp\Big(-C|x|^\alpha\int_{-1}^1\Big|p_n^{(\gamma,\delta)}(t)\rho^{(\eta,\tau)}(t)\Big|^\alpha dt\Big).
\end{equation*}
Now, the characteristic function of sum of the random variables $(A_n(\omega)+A_m(\omega))$ i.e. the characteristic function of
\begin{equation*}
\int_{-1}^1 \Big\{p_n^{(\gamma,\delta)}(t)+p_m^{(\gamma,\delta)}(t)\Big\}\rho^{(\eta,\tau)}(t)dX(t,\omega)
\end{equation*}
is
\begin{equation*}
\exp\Big(-C|x|^\alpha\int_{-1}^1\Big|\Big(p_n^{(\gamma,\delta)}(t)+p_m^{(\gamma,\delta)}(t)\Big)\rho^{(\eta,\tau)}(t)\Big|^\alpha dt\Big).
\end{equation*}
The product of characteristic functions of random variables $A_n(\omega)$ and $A_m(\omega)$ is
\begin{eqnarray*}
&&\exp\Big(-C|x|^\alpha\int_{-1}^1\Big|p_n^{(\gamma,\delta)}(t)\rho^{(\eta,\tau)}(t)\Big|^\alpha dt\Big).\exp\Big(-C|x|^\alpha\int_{-1}^1\Big|p_m^{(\gamma,\delta)}(t)\rho^{(\eta,\tau)}(t)\Big|^\alpha dt\Big)\\
&=&\exp\Big(-C|x|^\alpha\int_{-1}^1\Big(\Big|p_n^{(\gamma,\delta)}(t)\rho^{(\eta,\tau)}(t)\Big|^\alpha +\Big|p_m^{(\gamma,\delta)}(t)\rho^{(\eta,\tau)}(t)\Big|^\alpha\Big) dt\Big),
\end{eqnarray*}
which is clearly not equal to the characteristic function of the sum $\Big(A_n(\omega)+A_m(\omega)\Big).$ Hence, $A_n(\omega)$ are not independent random variables, for $\gamma,\delta>-1,$ and $\eta,\tau \geq 0.$
\end{proof}

Theorem \ref{T2.1} below is on the convergence of
 the random Fourier--Jacobi series (\ref{2.1}). The convergence of the series (\ref{2.1}) is established in the sense of probability.
In fact,
a sequence of random variables $X_n$ is said to converge in probability to a random variable $X,$ if
$\lim\limits_{n\rightarrow \infty} P(|X_n - X|>\epsilon) = 0, \; \epsilon>0.$
\begin{theorem}\label{T2.1}
Let $X(t, \omega),t \in \mathbb{R}$ be a symmetric stable process of index $\alpha \in(1,2]$ and $A_n(\omega)$ be defined as in (\ref{2.2}). If $\gamma,\delta>-1$ and $\eta,\tau \geq 0$ satisfy
\begin{equation}\label{2.4}
\Big|\eta- \frac{\gamma}{2} -\frac{1}{2} + \frac{1}{p}\Big|< min \Big(\frac{1}{4}, \frac{1}{2}+\frac{1}{2}\gamma\Big),
\end{equation}
\begin{equation}\label{2.5}
\Big|\tau-\frac{\delta}{2}-\frac{1}{2} + \frac{1}{p} \Big|< min \Big(\frac{1}{4},\frac{1}{2}+\frac{1}{2}\delta\Big),
\end{equation}
and $a_n$ are the Fourier--Jacobi coefficients of $f\in L_{[-1,1]}^{p,(\eta,\tau)}$ defined as in (\ref{1.3}), then the random Fourier--Jacobi series (\ref{2.1}) converges in probability to the stochastic integral
\begin{equation}\label{2.3}
\int_{-1}^1 f(y,t)\rho^{(\eta,\tau)}(t)dX(t,\omega),
\end{equation}
 for $p \geq \alpha>1.$
\end{theorem}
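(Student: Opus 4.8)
The plan is to turn each partial sum of (\ref{2.1}) into a single stochastic integral, and then reduce the probabilistic convergence to a deterministic convergence of kernels in a weighted $L^\alpha$ norm via the characteristic function of stable integrals. Set
\[
S_N(y,\omega):=\sum_{n=0}^N a_n A_n(\omega)\,p_n^{(\gamma,\delta)}(y).
\]
Substituting the definition (\ref{2.2}) of $A_n(\omega)$ and using the linearity of the stochastic integral, one obtains
\[
S_N(y,\omega)=\int_{-1}^1 K_N(y,t)\,\rho^{(\eta,\tau)}(t)\,dX(t,\omega),\qquad K_N(y,t):=\sum_{n=0}^N a_n\,p_n^{(\gamma,\delta)}(y)\,p_n^{(\gamma,\delta)}(t).
\]
Thus the target sum function (\ref{2.3}) corresponds to the limiting kernel $f(y,t)=\sum_{n=0}^\infty a_n p_n^{(\gamma,\delta)}(y)p_n^{(\gamma,\delta)}(t)$, and everything will follow once this kernel expansion is shown to converge in the right sense.

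Next I would exploit the characteristic-function identity recalled in the proof of Theorem \ref{L1.1}: for a stable integral $\int_{-1}^1 g\,dX$ the characteristic function is $\exp\big(-C|x|^\alpha\int_{-1}^1|g|^\alpha dt\big)$. Since $S_N-S_M$ is itself the stable integral of $(K_N-K_M)\rho^{(\eta,\tau)}$, its characteristic function equals
\[
\exp\Big(-C|x|^\alpha\int_{-1}^1\big|K_N(y,t)-K_M(y,t)\big|^\alpha\big|\rho^{(\eta,\tau)}(t)\big|^\alpha\,dt\Big).
\]
Because convergence in distribution to a constant is equivalent to convergence in probability, it suffices to prove that, for each fixed $y$, the family $K_N(y,\cdot)\rho^{(\eta,\tau)}$ is Cauchy in $L^\alpha[-1,1]$; the limit integral is then exactly (\ref{2.3}), and the existence statement for $p\ge\alpha$ quoted in the introduction guarantees that this limiting integral is a genuine random variable.

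To establish the $L^\alpha$-Cauchy property I would pass, via Minkowski's inequality (the triangle inequality in $L^\alpha$, $\alpha>1$), to a termwise estimate:
\[
\Big(\int_{-1}^1\big|K_N-K_M\big|^\alpha\big|\rho^{(\eta,\tau)}\big|^\alpha dt\Big)^{1/\alpha}\le\sum_{n=M+1}^N|a_n|\,\big|p_n^{(\gamma,\delta)}(y)\big|\,\Big(\int_{-1}^1\big|p_n^{(\gamma,\delta)}(t)\big|^\alpha\big|\rho^{(\eta,\tau)}(t)\big|^\alpha dt\Big)^{1/\alpha}.
\]
Hence the whole problem reduces to the absolute convergence of $\sum_n|a_n|\,|p_n^{(\gamma,\delta)}(y)|\,\|p_n^{(\gamma,\delta)}\rho^{(\eta,\tau)}\|_{\alpha}$. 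I would control the three factors separately: for $|a_n|$, split the weight in (\ref{1.3}) as $\rho^{(\gamma,\delta)}=\rho^{(\eta,\tau)}\rho^{(\gamma-\eta,\delta-\tau)}$ and apply H\"older's inequality with exponents $p,\,p'=p/(p-1)$ to get $|a_n|\le\|f\|_{L_{[-1,1]}^{p,(\eta,\tau)}}\,\|p_n^{(\gamma,\delta)}\rho^{(\gamma-\eta,\delta-\tau)}\|_{p'}$; for $|p_n^{(\gamma,\delta)}(y)|$, use the $O(n^{-1/2})$ interior bound valid for fixed $y\in(-1,1)$; and for $\|p_n^{(\gamma,\delta)}\rho^{(\eta,\tau)}\|_\alpha$, invoke the classical weighted-norm asymptotics for Jacobi polynomials.

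The main obstacle is precisely this last ingredient: the asymptotic size of weighted integrals $\int_{-1}^1|p_n^{(\gamma,\delta)}(t)|^q(1-t)^a(1+t)^b\,dt$. Near each endpoint this quantity switches between an interior regime, where the $n^{-1/2}$ Szeg\H{o} envelope controls the integral, and an endpoint regime, where the boundary value $p_n^{(\gamma,\delta)}(1)=O(n^{\gamma+1/2})$ (resp. $p_n^{(\gamma,\delta)}(-1)=O(n^{\delta+1/2})$) dominates. Tracking the resulting power of $n$ through the product of the three factors and demanding that the exponent fall strictly below the summability threshold is exactly what forces the two-sided bounds (\ref{2.4}) and (\ref{2.5}); the two entries of each $\min$ record which regime is active, the $\tfrac14$ coming from the oscillatory interior and the $\tfrac12+\tfrac{\gamma}{2}$ (resp. $\tfrac12+\tfrac{\delta}{2}$) from endpoint integrability. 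It is worth noting that $K_N(y,\cdot)$ is nothing but the $N$-th Fourier--Jacobi partial sum of the kernel $t\mapsto f(y,t)$, so the required weighted $L^\alpha$ convergence is a mean-convergence statement for Jacobi expansions of Pollard--Muckenhoupt type, which explains the $\min$-thresholds more structurally than the crude majorant does. The delicate, symmetric bookkeeping near both $t=\pm1$, carried out uniformly in $n$ and compatibly with $p\ge\alpha>1$, is where the real work lies; once it yields a summable majorant, convergence in probability to (\ref{2.3}) is immediate from the characteristic-function step above.
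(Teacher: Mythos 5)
Your opening moves coincide with the paper's: you rewrite the partial sum as the stochastic integral of the kernel $K_N(y,t)\rho^{(\eta,\tau)}(t)$ and reduce convergence in probability to convergence of the kernels in the weighted $L^\alpha$ norm. (The paper effects this reduction with the tail estimate of Lemma \ref{l2.1} from \cite{NPM} rather than with the characteristic function of the stable integral, but the two devices are interchangeable here.) The genuine gap is in how you propose to prove that $L^\alpha$ convergence. Passing through Minkowski's inequality to the termwise majorant $\sum_n |a_n|\,|p_n^{(\gamma,\delta)}(y)|\,\|p_n^{(\gamma,\delta)}\rho^{(\eta,\tau)}\|_{\alpha}$ asks for \emph{absolute} convergence of the Fourier--Jacobi expansion, and that is not available for a general $f\in L_{[-1,1]}^{p,(\eta,\tau)}$. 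Already in the model case $p=\alpha=2$, $\eta=\gamma/2$, $\tau=\delta/2$, the three factors are respectively square-summable and nothing better (Bessel), at best $O(n^{-1/2})$ at a fixed interior $y$, and exactly $1$ (since $p_n^{(\gamma,\delta)}\rho^{(\gamma/2,\delta/2)}$ is orthonormal in $L^2$); the majorant is then $\sum_n |a_n|n^{-1/2}$, which diverges for suitable $(a_n)\in\ell^2$, e.g. $a_n=n^{-1/2}(\log n)^{-1}$. No endpoint bookkeeping can repair this: conditions (\ref{2.4})--(\ref{2.5}) are the Muckenhoupt conditions for uniform boundedness of the partial-sum \emph{projection operators} on the weighted $L^p$ space, not conditions for absolute or unconditional convergence, so the thresholds cannot be recovered by summing norms of individual terms.

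The paper closes the argument by citing exactly that operator-theoretic fact: Theorem 1 of Muckenhoupt \cite{15A} gives $\lim_{n\to\infty}\int_{-1}^1|(\mathbf{s}_n^{(\gamma,\delta)}(f,t)-f(t))\rho^{(\eta,\tau)}(t)|^p\,dt=0$ under (\ref{2.4})--(\ref{2.5}), and since $[-1,1]$ has finite measure and $p\ge\alpha$, this yields the required weighted $L^\alpha$ convergence of the kernels. You do gesture at this route in your final sentence (``a mean-convergence statement of Pollard--Muckenhoupt type''), but the proof you actually outline runs through the termwise estimate, which fails. Replacing the Minkowski/termwise step by a direct appeal to Muckenhoupt's mean-convergence theorem makes the rest of your argument go through and reproduces the paper's proof.
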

The proof of this theorem requires the following result:
\begin{lemma} \label{l2.1} \cite{NPM}
Let $f(t)$ be any function in $L_{[a,b]}^p,\; p\geq 1$
and $X(t,\omega)$ be a symmetric stable process of index  $\alpha,$ for $1 \leq \alpha \leq 2.$ Then for all $\epsilon >0,$
\begin{equation*}
P\Bigg(\Bigg|\int_a^b f(t)dX(t,\omega)\Bigg|> \epsilon\Bigg)
\leq
\frac{C2^{\alpha+1}}{(\alpha+1)\epsilon'^{\alpha}}
\int_a^b|f(t)|^\alpha dt,
\end{equation*}
where $\epsilon' < \epsilon$ and $C$ is a positive constant, if $p \geq \alpha.$
\end{lemma}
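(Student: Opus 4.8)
The plan is to recognize the stochastic integral as a symmetric stable random variable, read off its characteristic function, and then convert the resulting estimate on $1-\phi$ near the origin into a tail bound by means of a classical truncation inequality. Throughout, write $Y(\omega):=\int_a^b f(t)\,dX(t,\omega)$ and $I:=\int_a^b|f(t)|^\alpha\,dt$; since $[a,b]$ is bounded, the hypothesis $p\ge\alpha$ gives $f\in L^\alpha_{[a,b]}$, so $I<\infty$ and the integral (\ref{1.1}) exists in probability.

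The first step is to identify the characteristic function of $Y$. Using the independent-increment and self-similar structure of the symmetric $\alpha$-stable process, exactly as in the proof of Theorem \ref{L1.1}, one gets
\begin{equation*}
\mathbb{E}\big[e^{ixY(\omega)}\big]=\exp\Big(-C|x|^\alpha\int_a^b|f(t)|^\alpha\,dt\Big)=\exp\big(-C|x|^\alpha I\big).
\end{equation*}
For simple $f$ this is immediate; for general $f\in L^\alpha_{[a,b]}$ one approximates $f$ by simple functions in the $L^\alpha$ norm and passes to the limit, using continuity of the defining characteristic functional. In particular $\phi_Y(x):=\mathbb{E}[e^{ixY}]$ is real and even, so $Y$ is symmetric.

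The second step is the truncation inequality. For a symmetric random variable with real characteristic function $\phi$ and any $u>0$,
\begin{equation*}
\frac{1}{u}\int_{-u}^{u}\big(1-\phi(t)\big)\,dt=\int_{-\infty}^{\infty}\Big(1-\frac{\sin ux}{ux}\Big)\,dF(x)\ge\tfrac12\,P\Big(|Y|\ge\tfrac{2}{u}\Big),
\end{equation*}
where the last step uses $1-\tfrac{\sin s}{s}\ge\tfrac12$ for $|s|\ge 2$ and discards the nonnegative integrand on $|x|<2/u$. Taking $u=2/\epsilon'$ yields a bound of the form $P(|Y|\ge\epsilon')\le c\,\epsilon'\int_0^{2/\epsilon'}\big(1-\phi_Y(t)\big)\,dt$ for an absolute constant $c$.

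Finally, insert $\phi_Y$, use $1-e^{-C t^\alpha I}\le C t^\alpha I$, and perform the elementary integration
\begin{equation*}
\int_0^{2/\epsilon'}t^\alpha\,dt=\frac{1}{\alpha+1}\Big(\frac{2}{\epsilon'}\Big)^{\alpha+1}=\frac{2^{\alpha+1}}{(\alpha+1)\,\epsilon'^{\alpha+1}},
\end{equation*}
which produces a bound of the stated shape $\dfrac{C\,2^{\alpha+1}}{(\alpha+1)\,\epsilon'^{\alpha}}\,I$; relaxing the level from $\epsilon'$ to $\epsilon$ via $P(|Y|>\epsilon)\le P(|Y|\ge\epsilon')$, valid since $\epsilon'<\epsilon$, gives the lemma. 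I expect the main obstacle to be twofold: rigorously justifying the characteristic-function identity for arbitrary $f\in L^\alpha$ (the $L^\alpha$-approximation together with the interchange of the limit and the characteristic functional), and tracking the exact numerical constant — the displayed integral already supplies the factors $2^{\alpha+1}$ and $(\alpha+1)^{-1}$, while matching the precise power of $2$ depends on the constant $c$ fixed in the truncation step.
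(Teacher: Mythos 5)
The paper itself does not prove this lemma; it is quoted from the cited reference \cite{NPM}, so there is no internal proof to compare against, and your reconstruction is the standard argument behind this estimate. It is correct: identifying $\phi_Y(x)=\exp\bigl(-C|x|^\alpha\int_a^b|f|^\alpha dt\bigr)$ and applying the classical truncation inequality reproduces the exact constant — with the correct normalization $\frac{1}{2u}\int_{-u}^{u}\bigl(1-\phi(t)\bigr)dt=\int\bigl(1-\frac{\sin ux}{ux}\bigr)dF(x)$ (your display has $\frac{1}{u}$, a harmless factor-of-two slip since $1-\frac{\sin s}{s}\ge 0$ everywhere and the inequality chain survives), the choice $u=2/\epsilon'$ gives $P(|Y|\ge\epsilon')\le\epsilon'\int_0^{2/\epsilon'}\bigl(1-\phi_Y(t)\bigr)dt\le C\epsilon' I\cdot\frac{2^{\alpha+1}}{(\alpha+1)\epsilon'^{\alpha+1}}=\frac{C2^{\alpha+1}}{(\alpha+1)\epsilon'^{\alpha}}I$, precisely the stated bound with the same $C$ as in the characteristic function. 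Your supporting steps — the embedding $L^p_{[a,b]}\subset L^\alpha_{[a,b]}$ for $p\ge\alpha$ on a bounded interval, the simple-function approximation justifying the characteristic-function identity, and the relaxation $P(|Y|>\epsilon)\le P(|Y|\ge\epsilon')$ for $\epsilon'<\epsilon$ — are all sound.
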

\textbf{Proof of Theorem \ref{T2.1}}.\\
Let
\begin{equation}\label{2.6}
\mathbf{S}_n^{(\gamma,\delta)}(f,y,\omega):=
\sum_{k=0}^n a_k
A_k(\omega)p_k^{(\gamma, \delta)}(y)
\end{equation}
 be the $n$th partial sum of the random Fourier--Jacobi series (\ref{2.1}).
The integral form of (\ref{2.6}) is
\begin{eqnarray*}
\mathbf{S}_n^{(\gamma,\delta)}(f,y,\omega)&:=& \sum_{k=0}^n a_k \Bigg(\int_{-1}^1 p_k^{(\gamma,\delta)}(t)\rho^{(\eta,\tau)}(t)dX(t,\omega) \Bigg) p_k^{(\gamma,\delta)}(y)\\
&=& \int_{-1}^1 \sum_{k=0}^n a_k p_k^{(\gamma,\delta)}(t) p_k^{(\gamma,\delta)}(y)\rho^{(\eta,\tau)}(t)dX(t,\omega)\\
&=&\int_{-1}^1 \mathbf{s}_n^{(\gamma,\delta)}(f,y,t)\rho^{(\eta,\tau)}(t)dX(t,\omega),
\end{eqnarray*}
where
\begin{equation*}
\mathbf{s}_n^{(\gamma,\delta)}(f,y,t):=\sum\limits_{k=0}^n a_k p_k^{(\gamma,\delta)}(y)p_k^{(\gamma,\delta)}(t).
\end{equation*}
\\With the help of Lemma \ref{l2.1},
\begin{eqnarray*}
&&P\Bigg(\Bigg|\int_{-1}^1f(y,t)\rho^{(\eta,\tau)}(t)dX(t,\omega)-\mathbf{S}_n^{(\gamma,\delta)}(f,y,\omega)\Bigg| > \epsilon\Bigg)\\
&=& P\Bigg(\Bigg|\int_{-1}^1f(y,t)\rho^{(\eta,\tau)}(t)dX(t,\omega)-\int_{-1}^1 \mathbf{s}_n^{(\gamma,\delta)}(f,y,t)\rho^{(\eta,\tau)}(t)dX(t,\omega)\Bigg|>\epsilon \Bigg)\\
&\leq& \frac{C2^{\alpha+1}}{(\alpha+1)\epsilon'^{\alpha}}
 \int_{-1}^1\Big|\Big(f(y,t)-\mathbf{s}_n^{(\gamma,\delta)}(f,y,t)\Big)\rho^{(\eta,\tau)}(t)\Big|^\alpha dt,\; for\; \epsilon'< \epsilon.
\end{eqnarray*}
For $f \in L_{[-1,1]}^{p,(\eta,\tau)},\; p>1,$ if the weights $\gamma,\delta,\eta,\tau$ satisfy the conditions (\ref{2.4}), (\ref{2.5}), then
the $n$th partial sum $\mathbf{s}_n^{(\gamma,\delta)}(f,t)$ converges to $f(t)$
(by Theorem 1 in \cite{15A}).
Hence, for $p \geq \alpha >1,$
\begin{equation*}
\lim_{n \rightarrow \infty}\int_{-1}^1\Big|\Big(f(y,t)-\mathbf{s}_n^{(\gamma,\delta)}(f,y,t)\Big)\rho^{(\eta,\tau)}(t)\Big|^\alpha dt=0.
\end{equation*}
This implies, the convergence of the random Fourier--Jacobi series (\ref{2.1}) to the stochastic integral (\ref{2.3}) in probability,
for $p \geq \alpha>1.$

The convergence of random Fourier--Jacobi series (\ref{2.1}) associated with the symmetric stable process of index $\alpha=1$ is demonstrated by the following theorem.
\begin{theorem}\label{T2.2}
Let $X(t,\omega)$ be a symmetric stable process of index $\alpha=1.$ Let $a_n$ and $A_n(\omega)$ as defined in (\ref{1.3}) and (\ref{2.2}) be the Fourier--Jacobi coefficients of some function $f$ in $L_{[-1,1]}^{1,(\eta,\tau)},\;\eta,\tau \geq 0,$ and $X(t,\omega),$ respectively. Then the random series (\ref{2.1}) converges in probability to the stochastic integral (\ref{2.3})
provided
\begin{equation}\label{2.7}
\gamma-\eta \geq 0,\; and \;\delta-\tau \geq 0.
\end{equation}
\end{theorem}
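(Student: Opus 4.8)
The plan is to run the argument of Theorem~\ref{T2.1} with the index specialised to $\alpha=1$, the only genuine change being the replacement of the underlying $L^p$ convergence input. First I would set
\begin{equation*}
\mathbf{S}_n^{(\gamma,\delta)}(f,y,\omega):=\sum_{k=0}^n a_k A_k(\omega) p_k^{(\gamma,\delta)}(y)
\end{equation*}
and, exactly as in (\ref{2.6}), recast it as the stochastic integral
\begin{equation*}
\mathbf{S}_n^{(\gamma,\delta)}(f,y,\omega)=\int_{-1}^1 \mathbf{s}_n^{(\gamma,\delta)}(f,y,t)\,\rho^{(\eta,\tau)}(t)\,dX(t,\omega),\qquad \mathbf{s}_n^{(\gamma,\delta)}(f,y,t)=\sum_{k=0}^n a_k p_k^{(\gamma,\delta)}(y)p_k^{(\gamma,\delta)}(t).
\end{equation*}
Since $f\in L_{[-1,1]}^{1,(\eta,\tau)}$ and $X$ is symmetric stable of index $\alpha=1$, the hypotheses of Lemma~\ref{l2.1} are met with $p=\alpha=1$.

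Next I would apply Lemma~\ref{l2.1} with $\alpha=1$ to the difference between the target integral (\ref{2.3}) and $\mathbf{S}_n^{(\gamma,\delta)}$, which gives, for $\epsilon'<\epsilon$,
\begin{equation*}
P\Bigg(\Bigg|\int_{-1}^1 f(y,t)\rho^{(\eta,\tau)}(t)\,dX(t,\omega)-\mathbf{S}_n^{(\gamma,\delta)}(f,y,\omega)\Bigg|>\epsilon\Bigg)\leq \frac{2C}{\epsilon'}\int_{-1}^1\Big|\big(f(y,t)-\mathbf{s}_n^{(\gamma,\delta)}(f,y,t)\big)\rho^{(\eta,\tau)}(t)\Big|\,dt.
\end{equation*}
Thus convergence in probability reduces to showing that the weighted $L^1$ error on the right tends to $0$. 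Concretely, for each fixed $y$ the right-hand integral is the weighted $L^1$ distance between the $y$-section $f(y,\cdot)$ and its Jacobi partial sum $S_n f(y,\cdot)$, where $S_n$ denotes the Fourier--Jacobi partial-sum operator; so the whole matter reduces to mean convergence of $S_n$ on $L_{[-1,1]}^{1,(\eta,\tau)}$.

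The hard part is exactly this endpoint $L^1$ convergence, and it is where conditions (\ref{2.7}) must do the work. The Pollard-type theorem (Theorem~1 in \cite{15A}) driving Theorem~\ref{T2.1} is available only for $p>1$, so it cannot be invoked here, and the admissible window (\ref{2.4})--(\ref{2.5}) is not the governing criterion at $p=1$. Instead I would prove mean convergence of $S_n$ in $L_{[-1,1]}^{1,(\eta,\tau)}$ by a Banach--Steinhaus argument: the algebraic polynomials form a dense subclass on which $S_n$ is eventually the identity ($S_n f=f$ once $n\geq\deg f$), so it suffices to secure the uniform bound $\sup_n\|S_n\|<\infty$ on $L_{[-1,1]}^{1,(\eta,\tau)}$. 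The conditions $\gamma-\eta\geq 0$ and $\delta-\tau\geq 0$ are precisely the weight relations (in the spirit of Muckenhoupt's mean-convergence criteria for Jacobi expansions) under which the weighted $L^1$ norm of the Christoffel--Darboux kernel stays bounded, taming the endpoint growth of order $n^{\gamma+\frac{1}{2}}$ and $n^{\delta+\frac{1}{2}}$ of $p_n^{(\gamma,\delta)}$ at $t=\pm1$. The main obstacle is therefore the verification of this uniform $L^1$ kernel estimate; once it is in place the right-hand side above vanishes as $n\to\infty$, and the random series (\ref{2.1}) converges in probability to (\ref{2.3}).
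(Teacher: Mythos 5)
Your reduction is the same as the paper's: rewrite the partial sum $\mathbf{S}_n^{(\gamma,\delta)}(f,y,\omega)$ as the stochastic integral of the deterministic Fourier--Jacobi partial sum $\mathbf{s}_n^{(\gamma,\delta)}(f,y,t)$ against $\rho^{(\eta,\tau)}(t)\,dX(t,\omega)$, apply Lemma~\ref{l2.1} with $p=\alpha=1$, and thereby reduce convergence in probability to the vanishing of the weighted $L^1$ error $\int_{-1}^1|(f-\mathbf{s}_n^{(\gamma,\delta)})\rho^{(\eta,\tau)}|\,dt$. You are also right that Muckenhoupt's theorem (Theorem 1 of \cite{15A}), which drives Theorem~\ref{T2.1}, is a $p>1$ result and cannot be used here. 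Up to that point you and the paper agree.

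The divergence, and the gap, is in how the endpoint $L^1$ mean convergence is obtained. The paper closes it by citing the mean-convergence theorem of Motornyi, Goncharov and Nitiema \cite{MGN}, which is precisely a $p=1$ result for Fourier--Jacobi series under weight conditions of the form (\ref{2.7}). You instead propose a Banach--Steinhaus argument whose entire burden is the uniform bound $\sup_n\|S_n\|<\infty$ on $L^{1,(\eta,\tau)}_{[-1,1]}$, and you explicitly leave that bound unverified (``the main obstacle is therefore the verification of this uniform $L^1$ kernel estimate''). That estimate \emph{is} the theorem; asserting that $\gamma-\eta\geq 0$ and $\delta-\tau\geq 0$ are ``precisely'' the conditions under which the weighted $L^1$ norm of the Christoffel--Darboux kernel stays bounded is not a proof, and it is moreover doubtful as stated: at the endpoint $p=1$ the partial-sum operators of orthogonal expansions typically fail to be uniformly bounded (their Lebesgue constants grow at least logarithmically, as in the trigonometric case), which is exactly why the $p=1$ case requires the separate and more delicate analysis of \cite{MGN} rather than a Pollard/Muckenhoupt-style uniform-boundedness argument. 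So the proposal is structurally aligned with the paper but has a genuine hole at its central analytic step, and the route you suggest for filling it may not be salvageable; the correct move is to invoke the known $L^1$ mean-convergence result of \cite{MGN} directly.
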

\begin{proof}
The proof follows the steps of Theorem \ref{T2.1} under the conditions (\ref{2.7}) for $\gamma,\delta,\eta,\tau$ and uses the result of
\cite{MGN}.
\end{proof}
\section{\textbf{Random Fourier--Jacobi series associated with Wiener process}}
In this section, we consider the stochastic process $X(t,\omega)$ to be the Wiener process $W(t,\omega),\;t \geq 0$ and
the $n$th degree polynomials
\begin{equation}\label{0.4}
q_n^{(\gamma,\delta)}(t):=p_n^{(\gamma,\delta)}(2t-1),\; n \in \mathbb{N}\cup {0},\gamma,\delta>-1
\end{equation}
as the orthogonal polynomials instead of the polynomials $\varphi_n(t)$ in the random series (\ref{0.2}). These $q_n^{(\gamma,\delta)}(t)$ are orthogonal in the interval $[0,1]$ and form a complete orthonormal set in the interval $[0,1]$ with respect to the weight
\begin{equation*}
\sigma^{(\gamma,\delta)}(t):={(1-t)}^{\gamma}t^\delta,\;\gamma,\delta >-1.
\end{equation*}
We known that the stochastic integral
\begin{equation}\label{0.3}
\int\limits_a^b f(t)dW(t,\omega)
\end{equation}
exists in quadratic mean, for $f \in L^2_{[a,b]}$ \cite{NPM}.
The random sequence $\{X_n\}_{n=0}^\infty$ is said to be convergent in quadratic mean to a random variable $X$, if
\begin{equation*}
\lim\limits_{n \rightarrow \infty}E\Big(|X_n-X|^2\Big)=0.
\end{equation*}
The stochastic integral (\ref{0.3}) is normally distributed random variable with
mean zero and finite variance $\int\limits_a^b |f(t)|^2 dt,$ if $f(t)$ is a function in $ L_{[a,b]}^2$ (c.f. \cite[Lukacs,~p.~148]{22}).
The $q_n^{(\gamma,\delta)}(t)\sigma^{(\eta,\tau)}(t)$ remains continuous, for $\eta,\tau \geq 0,$ and hence the stochastic integrals
\begin{equation}\label{2.8}
B_n(\omega):=\int\limits_0^1 q_n^{(\gamma,\delta)}(t)\sigma^{(\eta,\tau)}(t)dW(t,\omega)
\end{equation}
with weight function $\sigma^{(\eta,\tau)}(t),\eta,\tau \geq 0$ exist in quadratic mean. These $B_n(\omega)$ are random variables with mean zero and finite variance.
The following lemma proves the independence of random variables $B_n(\omega).$
\begin{theorem}\label{L2.2}
If $X(t,\omega)$ is the Wiener process $W(t,\omega),t \geq 0,$ then the random variables $B_n(\omega)$ associated with $W(t,\omega)$ are independent.
\end{theorem}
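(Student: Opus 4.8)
The plan is to exploit the Gaussian nature of integrals of deterministic functions against the Wiener process, for which independence reduces to the vanishing of covariances. First I would record that for each fixed $n$ the integrand $q_n^{(\gamma,\delta)}(t)\sigma^{(\eta,\tau)}(t)$ is a deterministic function lying in $L^2_{[0,1]}$ (it is continuous on $[0,1]$ for $\eta,\tau\geq 0$), so that $B_n(\omega)$ is a centred Gaussian random variable. More importantly, for any finite index set $n_1,\dots,n_k$ and scalars $c_1,\dots,c_k$ the linear combination $\sum_j c_j B_{n_j}=\int_0^1\big(\sum_j c_j q_{n_j}^{(\gamma,\delta)}\big)\sigma^{(\eta,\tau)}\,dW$ is again a Wiener integral of a deterministic $L^2$ function, hence Gaussian; this shows the family $\{B_n\}$ is jointly Gaussian. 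For a jointly Gaussian family, mutual independence is equivalent to pairwise uncorrelatedness, so it suffices to prove $\mathrm{Cov}(B_n,B_m)=0$ for $n\neq m$.

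Second, I would compute the covariance using the isometry property of the Wiener integral, namely $E\big[\int f\,dW\int g\,dW\big]=\int fg\,dt$ for deterministic $f,g\in L^2_{[0,1]}$. Since both $B_n$ and $B_m$ have mean zero, this yields
\[
E\big[B_n(\omega)B_m(\omega)\big]=\int_0^1 q_n^{(\gamma,\delta)}(t)\,q_m^{(\gamma,\delta)}(t)\,\big[\sigma^{(\eta,\tau)}(t)\big]^2\,dt.
\]

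The crux of the argument --- and the step I expect to be the main obstacle --- is to show that this integral vanishes for $n\neq m$ by invoking the orthogonality of the shifted Jacobi polynomials $q_n^{(\gamma,\delta)}(t)=p_n^{(\gamma,\delta)}(2t-1)$. After the substitution $y=2t-1$, the covariance becomes a constant multiple of $\int_{-1}^1 p_n^{(\gamma,\delta)}(y)p_m^{(\gamma,\delta)}(y)(1-y)^{2\eta}(1+y)^{2\tau}\,dy$, whereas the orthonormality of $p_n^{(\gamma,\delta)}$ holds with respect to the weight $(1-y)^{\gamma}(1+y)^{\delta}$. The delicate point is therefore to reconcile the squared integrand weight $\big[\sigma^{(\eta,\tau)}\big]^2=\sigma^{(2\eta,2\tau)}$ with the Jacobi orthogonality weight $\sigma^{(\gamma,\delta)}$; once the two weights are aligned, the orthonormality relation forces $E\big[B_n B_m\big]=\delta_{nm}$, giving pairwise uncorrelatedness and hence, by joint Gaussianity, mutual independence of the $B_n(\omega)$.

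Finally, as an alternative that parallels the characteristic-function computation in Theorem \ref{L1.1} with $\alpha=2$, I would note that the joint characteristic function of $(B_n,B_m)$ equals $\exp\big(-\tfrac12\,\mathrm{Var}(xB_n+yB_m)\big)$, which factors as the product of the individual characteristic functions precisely when the cross term $\int_0^1 q_n^{(\gamma,\delta)}q_m^{(\gamma,\delta)}\big[\sigma^{(\eta,\tau)}\big]^2\,dt$ vanishes. This reproduces the same vanishing-covariance condition and thus the same conclusion, while mirroring the structure of the proof already used for the symmetric stable case.
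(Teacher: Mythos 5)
Your overall route is the same as the paper's: reduce independence of the $B_n(\omega)$ to vanishing covariance and compute $E[B_nB_m]$ by the Wiener--integral isometry, arriving at $\int_0^1 q_n^{(\gamma,\delta)}(t)\,q_m^{(\gamma,\delta)}(t)\,\big[\sigma^{(\eta,\tau)}(t)\big]^2\,dt$. Two things you add are worth keeping: the observation that the family $\{B_n\}$ is jointly Gaussian, so that pairwise uncorrelatedness genuinely implies mutual independence, is a justification the paper leaves implicit but actually needs; and your characteristic-function variant is a correct reformulation of the same computation.

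However, the proposal stops exactly at the step that carries all the content. You correctly identify that the covariance involves the weight $\big[\sigma^{(\eta,\tau)}\big]^2=\sigma^{(2\eta,2\tau)}$ while the orthogonality of the $q_n^{(\gamma,\delta)}$ holds with respect to $\sigma^{(\gamma,\delta)}$, and you then say that \emph{once the two weights are aligned} the conclusion follows --- but you never align them, and in general they cannot be aligned: for arbitrary $\eta,\tau\ge 0$ and $\gamma,\delta>-1$ the integral $\int_0^1 q_nq_m\,\sigma^{(2\eta,2\tau)}\,dt$ has no reason to vanish, and by uniqueness of orthogonal polynomials it vanishes for all $n\ne m$ essentially only when $2\eta=\gamma$ and $2\tau=\delta$. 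So the proposal as written does not prove the theorem in the generality claimed. You should know that the paper's own proof hides the same difficulty behind an invalid step: it passes from $\int_0^1 q_nq_m\{\sigma^{(\eta,\tau)}\}^2\,dt$ to $C\int_0^1 q_nq_m\,\sigma^{(\eta,\tau)}\,dt$ by an inequality that fails because $q_nq_m$ changes sign, and then declares the latter integral zero even though orthogonality holds for the weight $\sigma^{(\gamma,\delta)}$, not $\sigma^{(\eta,\tau)}$. Your instinct that this is the crux is exactly right; a complete argument needs an additional hypothesis relating $(\eta,\tau)$ to $(\gamma,\delta)$, and without it the gap remains.
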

\begin{proof}
The Wiener process $W(t,\omega)$ has orthogonal increments and if $f,g\in L_{[a,b]}^2,$ then by Doob \cite[p.~427]{d1},
\begin{equation*}
E\Big(\int_a^b f(t)dW(t,\omega) \overline{\int_a^b g(t) dW(t,\omega)}\Big)=\int_a^b f(t)\overline{g(t)}dt,
\end{equation*}
 where $\overline{g(t)}$ is the complex conjugate of $g(t).$\\
Thus, for $t \in [0,1],$
\begin{eqnarray*}
E\Big(B_n(\omega)\overline{B_m(\omega)}\Big)&=&E\Bigg(\int_0^1 q_n^{(\gamma,\delta)}(t)\sigma^{(\eta,\tau)}(t)dW(t,\omega)\overline{\int_0^1 q_m^{(\gamma,\delta)}(t)\sigma^{(\eta,\tau)}(t)dW(t,\omega)}\Bigg)\\
&=&\int_0^1 q_n^{(\gamma,\delta)}(t)\sigma^{(\eta,\tau)}(t)\overline{q_m^{(\gamma,\delta)}(t)\sigma^{(\eta,\tau)}(t)}dt\\
&=&\int_0^1 q_n^{(\gamma,\delta)}(t)q_m^{(\gamma,\delta)}(t)\{\sigma^{(\eta,\tau)}(t)\}^2dt.
\end{eqnarray*}
Let
$|\sigma^{(\eta,\tau)}(t)| \leq C$ in $[0,1]$ as
$\sigma^{(\eta,\tau)}(t)$ is bounded in $[0,1].$
Hence
\begin{eqnarray*}
E\Big(B_n(\omega)\overline{B_m(\omega)}\Big)&=&\int_0^1 q_n^{(\gamma,\delta)}(t)q_m^{(\gamma,\delta)}(t)\{\rho^{(\eta,\tau)}(t)\}^2dt\\
&\leq & C \int_0^1 q_n^{(\gamma,\delta)}(t)q_m^{(\gamma,\delta)}(t)\rho^{(\eta,\tau)}(t)dt=0,
\end{eqnarray*}
where $C$ is a $+$ve constant.
This proves the fact that $\{B_n(\omega)\}_{n=0}^\infty$ is a sequence of independent random variables, for $\gamma,\eta,\tau \geq 0$ and $t \in[0,1].$
\end{proof}

Now consider the random series
\begin{equation}\label{2.9}
\sum_{n=0}^\infty b_n B_n(\omega)q_n^{(\gamma,\delta)}(y),
\end{equation}
where $B_n(\omega)$ are defined as in (\ref{2.8}),
$b_n$ are scalars defined by
\begin{equation}\label{2.10}
b_n:= \int_0^1 f(t)q_n^{(\gamma,\delta)}(t)\sigma^{(\gamma,\delta)}(t)dt.
\end{equation}
 The $B_n(\omega)$ and $b_n$ are called the modified Fourier--Jacobi coefficients of $W(t,\omega)$ and the function $f$ respectively.
If $b_n$ as defined in (\ref{2.10})
are the Fourier--Jacobi coefficients
of function $f \in L_{[0,1]}^{2,(\eta,\tau)},\eta,\tau \geq 0$ with respect to the Jacobi polynomials $q_n^{(\gamma,\delta)}(t),\; \gamma,\delta>-1,$ then it can be shown that the random series (\ref{2.9}) converges in quadratic mean to the stochastic integral
\begin{equation}\label{2.14}
\int\limits_0^1 f(y,t)\sigma^{(\eta,\tau)}(t)dW(t,\omega).
\end{equation}
The proof of this result needs the following theorem, which is a modified form of the Theorem 1 in \cite{15A}.
\begin{theorem}\label{t2.3}
If $f \in L_{[0,1]}^{2,(\eta,\tau)}$ and the following conditions are satisfied by $\eta,\tau \geq 0,\;\gamma,\;\delta> -1,$
\begin{eqnarray*}
\Big|\eta-\frac{1}{2}\gamma\Big| &<& min\Big(\frac{1}{4}, \frac{1}{2}+ \frac{1}{2}\gamma\Big)\\
\Big|\tau-\frac{1}{2}\delta\Big| &<& min\Big(\frac{1}{4}, \frac{1}{2}+ \frac{1}{2}\delta\Big),
\end{eqnarray*}
then
\begin{equation*}
\lim_{n\rightarrow \infty}\int_0^1\Big|\Big\{\mathbf{v}_n^{(\gamma,\delta)}(f,y)- f(y)\Big\}\sigma^{(\eta,\tau)}(y)\Big|^2 dy=0, \;for \; y \in [0,1],
\end{equation*}
where $\mathbf{v}_n^{(\gamma,\delta)}(f,y)$ is the $n$th partial sum of the Fourier--Jacobi series $\sum\limits_{n=0}^\infty b_n q_n^{(\gamma,\delta)}(y).$
\end{theorem}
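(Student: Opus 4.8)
The plan is to reduce the statement to Theorem 1 of \cite{15A} on the segment $[-1,1]$ (specialized to $p=2$) by means of the affine substitution $x=2y-1$, which carries $[0,1]$ onto $[-1,1]$ and intertwines the modified system $\{q_n^{(\gamma,\delta)}\}$ on $[0,1]$ with the Jacobi system $\{p_n^{(\gamma,\delta)}\}$ on $[-1,1]$. Since the deep mean--convergence estimate is already available in \cite{15A}, the task here is only to verify that this substitution preserves the weighted $L^2$ structure, intertwines the partial--sum operator, and carries the admissible range of the weights onto the hypotheses stated in the theorem.

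First I would record the elementary transformation identities: under $x=2y-1$ one has $1-x=2(1-y)$ and $1+x=2y$, hence $\rho^{(\gamma,\delta)}(2y-1)=2^{\gamma+\delta}\sigma^{(\gamma,\delta)}(y)$ and $\rho^{(\eta,\tau)}(2y-1)=2^{\eta+\tau}\sigma^{(\eta,\tau)}(y)$, together with $dx=2\,dy$. By the definition (\ref{0.4}), $q_n^{(\gamma,\delta)}(y)=p_n^{(\gamma,\delta)}(2y-1)$, so the orthonormality of the $p_n^{(\gamma,\delta)}$ on $[-1,1]$ with weight $\rho^{(\gamma,\delta)}$ transforms into that of the $q_n^{(\gamma,\delta)}$ on $[0,1]$ with weight $\sigma^{(\gamma,\delta)}$, the factor $2^{\gamma+\delta+1}$ being absorbed in the normalization. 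Setting $g(x):=f\big((x+1)/2\big)$, the substitution shows that $f\in L_{[0,1]}^{2,(\eta,\tau)}$ if and only if $g\in L_{[-1,1]}^{2,(\eta,\tau)}$, and that the coefficients $b_n$ of $f$ in (\ref{2.10}) coincide, up to a fixed constant, with the coefficients $a_n$ of $g$ in (\ref{1.3}); consequently the partial sums are intertwined via $\mathbf{v}_n^{(\gamma,\delta)}(f,y)=\mathbf{s}_n^{(\gamma,\delta)}(g,2y-1)$ once the normalization is fixed.

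With these identifications, I would change variables in the target integral to obtain
\begin{equation*}
\int_0^1\Big|\big\{\mathbf{v}_n^{(\gamma,\delta)}(f,y)-f(y)\big\}\sigma^{(\eta,\tau)}(y)\Big|^2 dy
= 2^{-(2(\eta+\tau)+1)}\int_{-1}^1\Big|\big\{\mathbf{s}_n^{(\gamma,\delta)}(g,x)-g(x)\big\}\rho^{(\eta,\tau)}(x)\Big|^2 dx,
\end{equation*}
so that convergence to zero on $[0,1]$ is equivalent to the weighted $L^2$ convergence on $[-1,1]$. It then remains to match the hypotheses: the condition (\ref{2.4}) of Theorem \ref{T2.1}, namely $|\eta-\tfrac{\gamma}{2}-\tfrac12+\tfrac1p|<\min(\tfrac14,\tfrac12+\tfrac12\gamma)$, reduces for $p=2$ (where $-\tfrac12+\tfrac1p=0$) to $|\eta-\tfrac{\gamma}{2}|<\min(\tfrac14,\tfrac12+\tfrac12\gamma)$, which is precisely the first hypothesis of the theorem, and likewise (\ref{2.5}) reduces to the second. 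Invoking Theorem 1 of \cite{15A} for $g$ then yields the required limit.

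The argument carries no essential analytic difficulty beyond bookkeeping; the one point demanding care is the normalization constant $2^{\gamma+\delta+1}$ arising from $dx=2\,dy$, which must be tracked consistently through the definitions of $q_n^{(\gamma,\delta)}$, the coefficients $b_n$, and the partial sum $\mathbf{v}_n^{(\gamma,\delta)}$ so that the intertwining relation $\mathbf{v}_n^{(\gamma,\delta)}(f,y)=\mathbf{s}_n^{(\gamma,\delta)}(g,2y-1)$ holds exactly. Once this constant is seen to cancel between the coefficients and the polynomials, the limit statement follows verbatim from the $[-1,1]$ theorem, and no new convergence estimate is needed.
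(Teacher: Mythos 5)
Your reduction to Theorem 1 of \cite{15A} via the affine substitution $x=2y-1$ is exactly the route the paper intends: the paper states Theorem \ref{t2.3} only as ``a modified form of the Theorem 1 in \cite{15A}'' and gives no further proof, so your proposal simply supplies the change-of-variables bookkeeping it leaves implicit. Your check that the hypotheses (\ref{2.4})--(\ref{2.5}) collapse to the stated conditions at $p=2$ (since $-\tfrac12+\tfrac1p=0$) and your tracking of the normalization constant $2^{\gamma+\delta+1}$ through $q_n^{(\gamma,\delta)}$, $b_n$, and $\mathbf{v}_n^{(\gamma,\delta)}$ are both correct.
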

The following theorem establishes the convergence of the random Fourier--Jacobi series (\ref{2.9}) in modified Jacobi polynomials $q_n^{(\gamma,\delta)}(t)$ associated with the Wiener process $W(t,\omega).$
 \begin{theorem}\label{T2.3}
Let $W(t,\omega),t \geq 0$ be the Wiener process and $B_n(\omega)$ be defined as in (\ref{2.8}). If the weights $\gamma,\delta >-1$ and $\eta,\tau \geq 0$ satisfy the conditions
\begin{equation}\label{2.13}
\Big|\eta-\frac{1}{2}\gamma\Big| < min\Big(\frac{1}{4}, \frac{1}{2}+ \frac{1}{2}\gamma\Big),\;\Big|\tau-\frac{1}{2}\delta\Big| < min\Big(\frac{1}{4}, \frac{1}{2}+ \frac{1}{2}\delta\Big),
\end{equation}
and $b_n$ are the Fourier--Jacobi coefficients of $f \in L_{[0,1]}^{2,(\eta,\tau)},$
then the random Fourier--Jacobi series (\ref{2.9}) converges to the integral (\ref{2.14}) in quadratic mean.
\end{theorem}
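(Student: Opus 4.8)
The plan is to mirror the argument of Theorem \ref{T2.1}, but with the tail estimate of Lemma \ref{l2.1} replaced by the exact mean-square isometry for the Wiener integral. First I would introduce the $n$th partial sum of the random series (\ref{2.9}),
\begin{equation*}
\mathbf{V}_n^{(\gamma,\delta)}(f,y,\omega) := \sum_{k=0}^n b_k B_k(\omega) q_k^{(\gamma,\delta)}(y),
\end{equation*}
and, using the definition (\ref{2.8}) of $B_k(\omega)$ together with the linearity of the Wiener integral over a finite sum, rewrite it as a single stochastic integral
\begin{equation*}
\mathbf{V}_n^{(\gamma,\delta)}(f,y,\omega) = \int_0^1 \mathbf{v}_n^{(\gamma,\delta)}(f,y,t)\,\sigma^{(\eta,\tau)}(t)\,dW(t,\omega),
\end{equation*}
where $\mathbf{v}_n^{(\gamma,\delta)}(f,y,t) := \sum_{k=0}^n b_k q_k^{(\gamma,\delta)}(y) q_k^{(\gamma,\delta)}(t).$

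Next I would form the mean-square error between the target integral (\ref{2.14}) and $\mathbf{V}_n^{(\gamma,\delta)}(f,y,\omega)$. Since both are stochastic integrals over the common interval $[0,1]$, their difference collapses to one integral,
\begin{equation*}
E\Big(\Big|\int_0^1 f(y,t)\sigma^{(\eta,\tau)}(t)\,dW(t,\omega) - \mathbf{V}_n^{(\gamma,\delta)}(f,y,\omega)\Big|^2\Big)
= E\Big(\Big|\int_0^1 \big(f(y,t) - \mathbf{v}_n^{(\gamma,\delta)}(f,y,t)\big)\sigma^{(\eta,\tau)}(t)\,dW(t,\omega)\Big|^2\Big).
\end{equation*}
The decisive step is then to apply the isometry recorded in the proof of Theorem \ref{L2.2} (Doob's identity with $f=g$). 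Because $f\in L_{[0,1]}^{2,(\eta,\tau)}$ forces $f\sigma^{(\eta,\tau)}\in L_{[0,1]}^2$, and, for $\eta,\tau\geq 0$, each $q_k^{(\gamma,\delta)}\sigma^{(\eta,\tau)}$ is continuous hence square-integrable on $[0,1]$, the integrand $\big(f(y,\cdot) - \mathbf{v}_n^{(\gamma,\delta)}(f,y,\cdot)\big)\sigma^{(\eta,\tau)}$ lies in $L_{[0,1]}^2$; the integral therefore exists in quadratic mean and
\begin{equation*}
E\Big(\Big|\int_0^1 \big(f(y,t) - \mathbf{v}_n^{(\gamma,\delta)}(f,y,t)\big)\sigma^{(\eta,\tau)}(t)\,dW(t,\omega)\Big|^2\Big)
= \int_0^1 \Big|\big(f(y,t) - \mathbf{v}_n^{(\gamma,\delta)}(f,y,t)\big)\sigma^{(\eta,\tau)}(t)\Big|^2\,dt.
\end{equation*}

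Finally, under the hypotheses (\ref{2.13}) on $\gamma,\delta,\eta,\tau$, Theorem \ref{t2.3} guarantees that this weighted $L^2$-distance between the partial sum and its limit tends to zero, so the right-hand side converges to $0$ as $n\to\infty$. This yields the vanishing of the mean-square error, which is precisely the asserted convergence in quadratic mean of the random Fourier--Jacobi series (\ref{2.9}) to the stochastic integral (\ref{2.14}).

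I expect the only genuinely delicate point to be the isometry step: it is valid exactly because of the $L^2$-membership just noted, and it is what collapses the probabilistic convergence onto the deterministic mean-convergence supplied by Theorem \ref{t2.3}. Everything else---pulling the finite sum inside the integral by linearity, and passing to the limit---is routine. Unlike Theorem \ref{T2.1}, no tail inequality is needed here, since the Wiener integral furnishes an exact mean-square identity rather than an estimate.
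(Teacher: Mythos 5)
Your proposal is correct and follows essentially the same route as the paper: rewrite the partial sum as a single Wiener integral, apply the mean-square isometry (the paper cites Lukacs's identity $E|\int g\,dW|^2=\beta^2\int|g|^2dt$, which is your Doob isometry up to the constant $\beta^2$), and conclude via the deterministic mean convergence of Theorem \ref{t2.3}. No substantive differences.
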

\begin{proof}
 Let
\begin{equation}\label{2.15.1}
T_n^{(\gamma,\delta)}(f,y,\omega):=
\sum_{k=0}^n b_k
B_k(\omega)q_k^{(\gamma, \delta)}(y),\;\gamma,\delta>-1
\end{equation} be the $n$th partial sum of the random Fourier--Jacobi series (\ref{2.9}).
The integral form of (\ref{2.15.1}) is
\begin{equation*}
\int_0^1 \mathbf{v}_n^{(\gamma,\delta)}(f,y,t)\sigma^{(\eta,\tau)}(t)dW(t,\omega),\; \eta,\tau \geq 0,
\end{equation*}
with
\begin{equation*}
\mathbf{v}_n^{(\gamma,\delta)}(f,y,t):=\sum\limits_{k=0}^n b_k q_k^{(\gamma,\delta)}(y)q_k^{(\gamma,\delta)}(t),
\end{equation*}
for $t \in [0,1].$
We know that (Lukacs \cite[p.~147]{22}) for $g \in L_{[a,b]}^2,$ if $W(t,\omega)$ is the Wiener process, then
\begin{equation}\label{2.21}
E\Big|\int_a^b g(t)dW(t,\omega)\Big|^2=\beta^2\int_a^b |g(t)|^2dt,
\end{equation}
where $\beta$ is a constant associated with the normal law of increment of the process $W(t,\omega),$ for $t \in [a,b].$
Hence, the equality
\begin{eqnarray*}
&&E\Big(\Big|\int_0^1 f(y,t)\sigma^{(\eta,\tau)}(t)dW(t,\omega)-T_n^{(\gamma,\delta)}(f,y,\omega)\Big|^2\Big)\\
&=& E\Big(\Big|\int_0^1 f(y,t)\sigma^{(\eta,\tau)}(t)dW(t,\omega)- \int_0^1 \mathbf{v}_n^{(\gamma,\delta)}(f,y,t)\sigma^{(\eta,\tau)}(t)dW(t,\omega)\Big|^2\Big)\\
&=&\beta^2\int_0^1 \Big|\Big(f(y,t)-\mathbf{v}_n^{(\gamma,\delta)}(f,y,t)\Big)\sigma^{(\eta,\tau)}(t)\Big|^2dt.
\end{eqnarray*}
 If $f \in L_{[0,1]}^{2,(\eta,\tau)}$ and
 $\gamma,\delta,\eta,\tau$ satisfy the conditions in (\ref{2.13}),
then by Theorem \ref{t2.3},
\begin{equation*}
\lim_{n \rightarrow \infty}\int\limits_0^1\Big|\Big(f(t)-\mathbf{v}_n^{(\gamma,\delta)}(f,t)\Big)\sigma^{(\eta,\tau)}(t)\Big|^2 dt =0.
\end{equation*}
Hence
\begin{equation*}
\lim\limits_{n \rightarrow \infty} \int\limits_0^1\Big|\Big(f(y,t)-\mathbf{v}_n^{(\gamma,\delta)}(f,y,t)\Big)\sigma^{(\eta,\tau)}(t)\Big|^2dt=0,
\end{equation*}
which implies convergence of the random series (\ref{2.9}) in quadratic mean to the integral (\ref{2.14}).
\end{proof}

The almost sure convergence of the random Fourier--Jacobi series (\ref{2.9}) is derived in Theorem \ref{T2.4}.
The Kolmogorov Theorem stated below is required to prove it.
\begin{theorem} \label{t2.4}(Kolmogorov Theorem) \\
Let ${(X_n)}_{n=1}^\infty$ be independent random variables with expected values $E[X_n]=\mu_n$ and variances $Var(X_n)=\sigma_n^2,$ such that $\sum\limits_{n=1}^\infty \mu_n$ converges in $\mathbb{R}$ and $\sum\limits_{n=1}^\infty \sigma_n^2$ converges in $\mathbb{R}.$ Then $\sum\limits_{n=1}^\infty X_n$ converges in $\mathbb{R}$ almost surely.
\end{theorem}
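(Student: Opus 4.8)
The plan is to prove the classical Kolmogorov two-series theorem by first centering the variables and then establishing almost sure Cauchyness of the partial sums via Kolmogorov's maximal inequality. I would set $Y_n := X_n - \mu_n$, so that the $Y_n$ are independent with $E[Y_n]=0$ and $\operatorname{Var}(Y_n)=\sigma_n^2$. Since the deterministic series $\sum_{n=1}^\infty \mu_n$ is assumed to converge in $\mathbb{R}$, and $\sum_{n=1}^\infty X_n = \sum_{n=1}^\infty Y_n + \sum_{n=1}^\infty \mu_n$, it suffices to prove that $\sum_{n=1}^\infty Y_n$ converges almost surely. This reduces the problem to the single-series statement for mean-zero variables with summable variances.

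The principal tool is Kolmogorov's maximal inequality: if $Y_1,\dots,Y_N$ are independent with mean zero and finite variances, and $S_k := \sum_{j=1}^k Y_j$, then for every $\epsilon>0$,
\begin{equation*}
P\Big(\max_{1\le k\le N}|S_k|\ge \epsilon\Big)\le \frac{1}{\epsilon^2}\sum_{j=1}^N \operatorname{Var}(Y_j).
\end{equation*}
I would apply this not to the initial segment but to the increments beyond index $m$: letting $T_k^{(m)} := S_{m+k}-S_m = \sum_{j=m+1}^{m+k} Y_j$, the same inequality gives
\begin{equation*}
P\Big(\max_{1\le k\le N}|T_k^{(m)}|\ge \epsilon\Big)\le \frac{1}{\epsilon^2}\sum_{j=m+1}^{m+N}\sigma_j^2.
\end{equation*}
Letting $N\to\infty$ and using monotone convergence of the events yields $P\big(\sup_{k\ge 1}|S_{m+k}-S_m|\ge \epsilon\big)\le \epsilon^{-2}\sum_{j>m}\sigma_j^2$.

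To conclude almost sure convergence, I would invoke the Cauchy criterion. Because $\sum_{n=1}^\infty \sigma_n^2$ converges, its tail $\sum_{j>m}\sigma_j^2\to 0$ as $m\to\infty$, so the right-hand side above tends to $0$. This shows that the oscillation of the partial sums beyond level $m$ becomes arbitrarily small in probability, and a standard argument (choosing $\epsilon=\epsilon_i\downarrow 0$ along a subsequence $m_i$ with summable bounds and applying Borel--Cantelli) upgrades this to the statement that $(S_n)$ is almost surely a Cauchy sequence in $\mathbb{R}$. Completeness of $\mathbb{R}$ then forces $\sum Y_n$ to converge almost surely, and adding back the convergent deterministic series $\sum \mu_n$ completes the argument.

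The main obstacle is the justification of Kolmogorov's maximal inequality itself, since it is the one nontrivial probabilistic input. I would prove it by a first-entrance (stopping-time) decomposition: define the first index $k$ at which $|S_k|\ge\epsilon$, partition the event $\{\max_k|S_k|\ge\epsilon\}$ according to this first index, and exploit the independence of $Y_{k+1},\dots,Y_N$ from the stopped initial segment together with $E[Y_j]=0$ to discard the cross terms when estimating $E[S_N^2]$ from below. Everything else — the centering, the tail estimate, and the Borel--Cantelli upgrade — is routine once this inequality is in hand.
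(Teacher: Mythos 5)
The paper does not prove this statement at all: it is quoted as a known classical result (Kolmogorov's two-series theorem) and used as a black box in the proof of Theorem \ref{T2.4}, so there is no in-paper argument to compare against. Your proposal is the standard and correct proof of that classical theorem --- centering by the means, Kolmogorov's maximal inequality applied to the tail increments $S_{m+k}-S_m$, letting $N\to\infty$, and upgrading the vanishing tail bound to almost sure Cauchyness via Borel--Cantelli --- and the first-entrance decomposition you sketch for the maximal inequality is the usual one; the only cosmetic remark is that the event $\bigcup_N\{\max_{k\le N}|T_k^{(m)}|\ge\epsilon\}$ is $\{\sup_k|T_k^{(m)}|\ge\epsilon\}$ only up to replacing $\ge\epsilon$ by $>\epsilon'$ for $\epsilon'<\epsilon$, which does not affect the conclusion.
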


The following facts about $p_n^{(\gamma,\delta)}(y)$ will be useful to establish the almost sure convergence of the series (\ref{2.9}).
We know that the Jacobi polynomials
 \begin{equation}\label{2.16}
|p_n^{(\gamma,\delta)}(y)| \leq C'n^{-\frac{1}{2}}{(1-y+n^{-2})}^{-\frac{1}{2}\gamma-\frac{1}{4}},\; 0\leq y \leq 1,\;\gamma, \delta >-1,
\end{equation}
where $C'$ is a constant independent of $y$ and $n$ \cite[p.~167]{1}.

The equality
\begin{center}
$p_n^{(\gamma,\delta)}(y)
={(-1)}^n p_n^{(\gamma,\delta)}(-y)$ (see \cite[p.~71]{1})
\end{center}
extends the inequality (\ref{2.16}), to hold for all $y \in [-1,1].$
This can be applied for the modified orthonormal Jacobi polynomials $q_n^{(\gamma,\delta)}(y)$ in the interval $[0,1],$ and we obtain
\begin{eqnarray*}
|q_n^{(\gamma,\delta)}(y)|&\leq &  C'n^{-\frac{1}{2}}{(1-y+n^{-2})}^{-\frac{1}{2}\gamma-\frac{1}{4}}\\
&=&\frac{C'}{{[(1-y)n^2+1]}^{\gamma/2+1/4}n^{-\gamma}}\\
&=&\frac{C'n^\gamma}{{[(1-y)n^2+1]}^{\gamma/2+1/4}}
\end{eqnarray*}
where $1/[(1-y)n^2+1]$ is bounded by $1/2$ in $[0,1].$\\
Hence
\begin{equation*}
|q_n^{(\gamma,\delta)}(y)| \leq \frac{C'n^\gamma}{{2}^{\gamma/2+1/4}}
\end{equation*}
\begin{equation}\label{2.18}
i.e \; |q_n^{(\gamma,\delta)}(y)| \leq C n^\gamma,
\end{equation}
where $C$ is a constant independent of $y$ and $n.$
\begin{theorem}\label{T2.4}
Let $W(t,\omega),\;t \geq 0$ be the Wiener process and $B_n(\omega)$ be as defined in (\ref{2.8}).
 If $\gamma,\delta>-1$ and $\eta,\tau \geq 0$ satisfy the conditions (\ref{2.13}) in Theorem \ref{T2.3} and
\begin{equation}\label{2.20}
\sum_{n=0}^\infty \{n^{(2\gamma)}|b_n|\}^2 < \infty,
\end{equation}
then the series
(\ref{2.9}) converges almost surely
to the stochastic integral (\ref{2.14}), for the scalars $b_n$ are the Fourier--Jacobi coefficients of the function $f \in L_{[0,1]}^{2,(\eta,\tau)},\eta,\tau \geq 0.$
\end{theorem}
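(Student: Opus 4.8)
The plan is to fix $y \in [0,1]$ and regard the $n$th term of the series (\ref{2.9}), namely $X_n := b_n B_n(\omega) q_n^{(\gamma,\delta)}(y)$, as a sequence of random variables in $\omega$, then verify the hypotheses of the Kolmogorov Theorem (Theorem \ref{t2.4}). Since $b_n q_n^{(\gamma,\delta)}(y)$ is a deterministic scalar for fixed $y$, and the $B_n(\omega)$ are independent by Theorem \ref{L2.2}, the $X_n$ form an independent family. As each $B_n(\omega)$ has mean zero, $E[X_n] = b_n q_n^{(\gamma,\delta)}(y)\,E[B_n(\omega)] = 0$, so $\sum_n E[X_n]$ converges trivially.

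Next I would estimate the variances. By (\ref{2.21}) applied to $g = q_n^{(\gamma,\delta)}\sigma^{(\eta,\tau)}$, together with the mean-zero property,
\begin{equation*}
Var(B_n(\omega)) = E|B_n(\omega)|^2 = \beta^2 \int_0^1 \big|q_n^{(\gamma,\delta)}(t)\big|^2 \big(\sigma^{(\eta,\tau)}(t)\big)^2\, dt.
\end{equation*}
Using the pointwise bound (\ref{2.18}), $|q_n^{(\gamma,\delta)}(t)| \leq C n^\gamma$ on $[0,1]$, and noting that $\int_0^1 (\sigma^{(\eta,\tau)}(t))^2\, dt = \int_0^1 (1-t)^{2\eta} t^{2\tau}\, dt$ is a finite constant for $\eta,\tau \geq 0$, I obtain $Var(B_n(\omega)) \leq K\, n^{2\gamma}$ for some constant $K$. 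Applying (\ref{2.18}) once more to $q_n^{(\gamma,\delta)}(y)$ gives
\begin{equation*}
Var(X_n) = |b_n|^2 \big|q_n^{(\gamma,\delta)}(y)\big|^2 Var(B_n(\omega)) \leq C^2 K\, \big(n^{2\gamma}|b_n|\big)^2.
\end{equation*}
Summing over $n$ and invoking the hypothesis (\ref{2.20}) yields $\sum_n Var(X_n) \leq C^2 K \sum_n (n^{2\gamma}|b_n|)^2 < \infty$. Thus both $\sum_n E[X_n]$ and $\sum_n Var(X_n)$ converge, and the Kolmogorov Theorem guarantees that $\sum_n X_n = \sum_n b_n B_n(\omega) q_n^{(\gamma,\delta)}(y)$ converges almost surely to some random variable.

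The remaining, and conceptually most delicate, step is to identify this almost sure limit with the stochastic integral (\ref{2.14}). Here I would invoke Theorem \ref{T2.3}: under the same conditions (\ref{2.13}) on $\gamma,\delta,\eta,\tau$, the series (\ref{2.9}) already converges in quadratic mean to (\ref{2.14}). Since both almost sure convergence and quadratic mean convergence imply convergence in probability, and the limit in probability is unique up to almost sure equality, the almost sure limit must coincide with the stochastic integral (\ref{2.14}), completing the argument. I expect the main obstacle to be the variance estimate itself: the crude polynomial bound (\ref{2.18}) forces the factor $n^{2\gamma}$ into each of $Var(B_n(\omega))$ and $|q_n^{(\gamma,\delta)}(y)|^2$, which is exactly why the strong summability condition (\ref{2.20}) with exponent $2\gamma$ is imposed; a sharper treatment of $\int_0^1 |q_n^{(\gamma,\delta)}(t)|^2(\sigma^{(\eta,\tau)}(t))^2\, dt$ exploiting the orthonormality weight $\sigma^{(\gamma,\delta)}$ rather than the uniform bound would be needed to weaken it.
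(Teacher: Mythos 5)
Your proposal is correct and follows essentially the same route as the paper: independence of the $B_n(\omega)$, mean zero, the variance bound via the identity (\ref{2.21}) together with the pointwise estimate $|q_n^{(\gamma,\delta)}|\leq Cn^\gamma$ and boundedness of $\sigma^{(\eta,\tau)}$, and then Kolmogorov's theorem under hypothesis (\ref{2.20}). In fact you are slightly more complete than the paper, which asserts directly that the almost sure limit is the stochastic integral (\ref{2.14}), whereas you justify this identification by combining the quadratic mean convergence of Theorem \ref{T2.3} with uniqueness of limits in probability.
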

\begin{proof}
 We know that
 the independent random variables $B_n(\omega)$ are normally distributed with mean zero and finite variance. Hence each $b_nB_n(\omega)q_n^{(\gamma,\delta)}(y),$ for $n=1,2,\dots$ are normally distributed, independent random variables with mean zero and finite variance.
Now using the identity (\ref{2.21}), the sum of the variance of these random variables is
\begin{eqnarray*}
\sum_{n=0}^\infty E\Big|b_nB_n(\omega)q_n^{(\gamma,\delta)}(y)\Big|^2
&=&\sum_{n=0}^\infty E\Big| b_n \int_0^1 q_n^{(\gamma,\delta)}(t)\sigma^{(\eta,\tau)}(t)dW(t,\omega)q_n^{(\gamma,\delta)}(y)\Big|^2\\
&=&\sum_{n=0}^\infty E\Big|\int_0^1 b_n q_n^{(\gamma,\delta)}(y) q_n^{(\gamma,\delta)}(t)\sigma^{(\eta,\tau)}(t)dW(t,\omega)\Big|^2\\
&=&\sum_{n=0}^\infty  \int_0^1\Big|b_n q_n^{(\gamma,\delta)}(y) q_n^{(\gamma,\delta)}(t)\sigma^{(\eta,\tau)}(t)\Big|^2 dt.
\end{eqnarray*}
Since $q_n^{(\gamma,\delta)}(t)$ are bounded by $Cn^{\gamma}$ (inequality (\ref{2.18})) and $\sigma^{(\eta,\tau)}(t)$ is bounded, for $\eta,\tau\geq 0,$
we have the inequality
\begin{eqnarray*}
\sum_{n=0}^\infty E\Big | b_n B_n(\omega)q_n^{(\gamma,\delta)}(y)\Big|^2
&\leq& K \sum_{n=0}^\infty {(|b_n |n^{2\gamma})}^2,
\end{eqnarray*}
which will be finite, for $f \in L_{[0,1]}^{2,(\eta,\tau)},$ if $\sum\limits_{n=0}^\infty {(|b_n|n^{2\gamma})}^2$ is finite.
Now by Kolmogorov's Theorem (Theorem \ref{t2.4}), the series (\ref{2.9}) converges
to the integral $\int\limits_0^1 f(y,t)\sigma^{(\eta,\tau)}(t)dW(t,\omega)$
almost surely in $\mathbb{R},$ if $\gamma,\delta>-1,\eta,\tau \geq 0$ satisfy the conditions (\ref{2.13}) in Theorem \ref{T2.3}.
\end{proof}
\section{\textbf{Continuity property of the sum functions}}
\subsection{\textbf{Sum function associated with symmetric stable process}}
The sum function of the random Fourier--Jacobi series (\ref{2.1}) associated with the symmetric stable process $X(t,\omega)$ is shown to be
weakly continuous in probability.
We know that a function $f(t,\omega)$ is said to be weakly continuous in probability at $t = t_0,$
if for all $\epsilon > 0,$
$\lim\limits_{h \rightarrow 0}P(\vert f(t_0 + h, \omega) - f(t_0, \omega) \vert > \epsilon) = 0.$
If a function $f(t,\omega)$ is weakly continuous at every $t_0 \in[a,b],$ then the function $f(t,\omega)$ is said to be weakly continuous in
probability in the closed interval
$[a, b].$
The proof of this result required the following lemma.
\begin{lemma} \label{l2.2} \cite[p.~37]{27}
If $f$ is periodic or in $L_{[a,b]}^p,1 \leq p<\infty$ or continuous function, then the integral
\begin{equation*}
\Bigg\{\int_a^b \Big|f(x+t)-f(x)\Big|^pdx\Bigg\}^{1/p}
\end{equation*}
 tends to $0$ as $t$ tends to $0.$
\end{lemma}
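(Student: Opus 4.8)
The plan is to prove the statement first for a convenient dense subclass of $L_{[a,b]}^p$ and then transfer it to a general $f$ by approximation. Throughout, write $\|h\|_p:=\big\{\int_a^b|h(x)|^p\,dx\big\}^{1/p}$ for the ordinary (unweighted) $L^p$ norm, and interpret $f(x+t)$ by extending $f$ to all of $\mathbb{R}$ (by zero outside $[a,b]$, or periodically in the periodic case) so that the translate is well defined and Lebesgue measure is translation invariant. The subclass I would use is the continuous functions with compact support contained in the open interval $(a,b)$, which are dense in $L_{[a,b]}^p$ precisely because $p<\infty$.

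First I would settle the continuous case. If $g$ is continuous with compact support in $(a,b)$, then $g$ is uniformly continuous on $\mathbb{R}$, so given $\epsilon>0$ there is $\delta>0$ with $|g(x+t)-g(x)|<\epsilon$ whenever $|t|<\delta$, uniformly in $x$; moreover for $|t|$ small the supports of $g$ and of $g(\cdot+t)$ stay inside a fixed compact subset of $[a,b]$. Raising to the $p$-th power and integrating gives $\|g(\cdot+t)-g\|_p\le \epsilon\,(b-a)^{1/p}$, which tends to $0$ as $t\to 0$.

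Next I would handle a general $f\in L_{[a,b]}^p$ by density. Given $\epsilon>0$, choose such a $g$ with $\|f-g\|_p<\epsilon$. By the triangle inequality,
\[
\|f(\cdot+t)-f\|_p\le \|f(\cdot+t)-g(\cdot+t)\|_p+\|g(\cdot+t)-g\|_p+\|g-f\|_p.
\]
The first term equals $\|f-g\|_p<\epsilon$ by translation invariance of Lebesgue measure, the third term is $<\epsilon$ by the choice of $g$, and the middle term tends to $0$ by the continuous case just proved. Hence $\limsup_{t\to 0}\|f(\cdot+t)-f\|_p\le 2\epsilon$, and since $\epsilon>0$ is arbitrary the left-hand side tends to $0$, which is exactly the assertion of the lemma.

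The main obstacle is not any single estimate but securing the two structural inputs cleanly. The first is the density of compactly supported continuous functions in $L_{[a,b]}^p$, which is where the hypothesis $p<\infty$ is indispensable; the second is the translation invariance used to bound $\|f(\cdot+t)-g(\cdot+t)\|_p$, which forces one to fix the extension of $f$ past the endpoints $a$ and $b$ once and for all. Keeping the bookkeeping of supports near $a$ and $b$ honest, so that no spurious boundary mass is created when $g$ is translated, is the only genuinely delicate point; everything else is the standard three-term ($\epsilon/3$) approximation argument.
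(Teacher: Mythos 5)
Your proof is correct and is the standard density/three-term argument for continuity of translation in $L^p$; the paper itself does not prove this lemma but merely cites it from Zygmund, where essentially the same argument (approximation by continuous functions plus translation invariance) is given. The one point worth keeping explicit, as you do, is fixing the extension of $f$ beyond $[a,b]$ so that $\|f(\cdot+t)-g(\cdot+t)\|_p\le\|f-g\|_p$ is legitimate, and noting that the hypothesis $p<\infty$ is exactly what makes the dense subclass available.
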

\begin{theorem} \label{T2.3.1}
In the Theorem \ref{T2.1} and Theorem \ref{T2.2}, the sum function (\ref{2.3}) of the random Fourier--Jacobi series (\ref{2.1}) associated with the symmetric stable process
according to the respective conditions of $\gamma,\delta,\eta, \tau$
 are weakly continuous in probability.
\end{theorem}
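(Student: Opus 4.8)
The plan is to exploit the representation of the sum function as a stochastic integral and reduce weak continuity in probability to an $L^\alpha$-continuity statement for its integrand. By Theorems \ref{T2.1} and \ref{T2.2}, the sum function is $F(y,\omega):=\int_{-1}^1 f(y,t)\rho^{(\eta,\tau)}(t)\,dX(t,\omega)$, where $f(y,t)$ is the weighted-$L^\alpha$ limit of the bilinear partial sums $\mathbf{s}_n^{(\gamma,\delta)}(f,y,t)=\sum_{k=0}^n a_k p_k^{(\gamma,\delta)}(y)p_k^{(\gamma,\delta)}(t)$. Fixing $y_0\in[-1,1]$ and $\epsilon>0$, linearity of the stochastic integral gives
\begin{equation*}
F(y_0+h,\omega)-F(y_0,\omega)=\int_{-1}^1\big(f(y_0+h,t)-f(y_0,t)\big)\rho^{(\eta,\tau)}(t)\,dX(t,\omega),
\end{equation*}
and Lemma \ref{l2.1}, valid for every index $\alpha\in[1,2]$ (hence for the range $\alpha\in(1,2]$ of Theorem \ref{T2.1} as well as $\alpha=1$ of Theorem \ref{T2.2}), yields for $\epsilon'<\epsilon$ the bound
\begin{equation*}
P\big(|F(y_0+h,\omega)-F(y_0,\omega)|>\epsilon\big)\le\frac{C2^{\alpha+1}}{(\alpha+1)\epsilon'^{\alpha}}\int_{-1}^1\big|\big(f(y_0+h,t)-f(y_0,t)\big)\rho^{(\eta,\tau)}(t)\big|^{\alpha}\,dt.
\end{equation*}
It therefore suffices to show that this integral tends to $0$ as $h\to 0$, i.e. that $y\mapsto f(y,\cdot)\rho^{(\eta,\tau)}(\cdot)$ is continuous from $[-1,1]$ into $L^\alpha_{[-1,1]}$ at $y_0$.

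To handle the integral I would first treat a fixed partial sum $\mathbf{s}_N^{(\gamma,\delta)}(f,y,t)$: as a function of $y$ it is a finite linear combination, with polynomial (hence continuous) coefficients $a_k p_k^{(\gamma,\delta)}(y)$, of the fixed functions $p_k^{(\gamma,\delta)}(t)\rho^{(\eta,\tau)}(t)\in L^\alpha_{[-1,1]}$, so that $\int_{-1}^1\big|\big(\mathbf{s}_N^{(\gamma,\delta)}(f,y_0+h,t)-\mathbf{s}_N^{(\gamma,\delta)}(f,y_0,t)\big)\rho^{(\eta,\tau)}(t)\big|^\alpha\,dt\to 0$ as $h\to 0$ for each fixed $N$, because each $p_k^{(\gamma,\delta)}(y_0+h)-p_k^{(\gamma,\delta)}(y_0)\to 0$. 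The hypotheses (\ref{2.4})--(\ref{2.5}) together with Theorem 1 of \cite{15A} guarantee that $\mathbf{s}_N^{(\gamma,\delta)}(f,y,\cdot)\rho^{(\eta,\tau)}\to f(y,\cdot)\rho^{(\eta,\tau)}$ in $L^\alpha_{[-1,1]}$, while Lemma \ref{l2.2} supplies the continuity of translation in $L^\alpha$ needed to majorise the weighted increment of the limit by a modulus of continuity that vanishes with $h$. Splitting the increment of $f$ through that of a partial sum by a three-term estimate then forces the full integral to $0$. For the $\alpha=1$ case of Theorem \ref{T2.2} the identical scheme applies, with the conditions (\ref{2.7}) and the weighted $L^1$ convergence of \cite{MGN} replacing (\ref{2.4})--(\ref{2.5}) and Theorem 1 of \cite{15A}.

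The hard part will be the uniformity in the three-term estimate: it needs the tail $\|f(y,\cdot)\rho^{(\eta,\tau)}-\mathbf{s}_N^{(\gamma,\delta)}(f,y,\cdot)\rho^{(\eta,\tau)}\|_{L^\alpha_{[-1,1]}}$ to be small uniformly for $y$ in a neighbourhood of $y_0$, whereas the cited convergence results a priori control the weighted $L^\alpha$ error only for each frozen value of the polynomial argument. Establishing this locally uniform convergence --- equivalently, joint control of the Jacobi partial-sum operator in the two variables under the weight conditions (\ref{2.4})--(\ref{2.5}) --- is the crux, since the shift is taken in the argument $y$ of the polynomials rather than in the integration variable $t$, so that Lemma \ref{l2.2} cannot be applied to a genuine translate directly. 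Once this uniformity is secured the remaining steps are routine, and the argument closes simultaneously under the conditions of Theorem \ref{T2.1} and of Theorem \ref{T2.2}, giving weak continuity in probability of the sum function on all of $[-1,1]$.
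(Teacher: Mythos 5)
Your opening reduction coincides exactly with the paper's: write the difference of the sum function at two points $x$ and $y$ as a single stochastic integral and apply Lemma \ref{l2.1} to get
\[
P\Big(\big|F(x,\omega)-F(y,\omega)\big|>\epsilon\Big)\;\leq\;\frac{C2^{\alpha+1}}{(\alpha+1)\epsilon'^{\alpha}}\int_{-1}^1\Big|\big(f(x,t)-f(y,t)\big)\rho^{(\eta,\tau)}(t)\Big|^{\alpha}\,dt ,
\]
so that everything hinges on this integral vanishing as $y\to x$. Where you part company with the paper is in how that integral is killed. The paper disposes of it in one line: it notes that $\rho^{(\eta,\tau)}$ is bounded for $\eta,\tau\geq 0$ and then invokes Lemma \ref{l2.2} (continuity of translation in $L^p$) to assert $\int_{-1}^1|f(x,t)-f(y,t)|^{\alpha}\,dt\to 0$. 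You correctly point out that this is not a genuine translation --- the increment sits in the polynomial argument $y$ of the kernel limit $f(y,t)$, not in the integration variable $t$ --- and you replace the appeal to Lemma \ref{l2.2} by a three-term estimate through the bilinear partial sums $\mathbf{s}_N^{(\gamma,\delta)}(f,y,t)$.

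That substitution is where your proposal does not close, and you say so yourself: the three-term estimate needs $\big\|\big(f(y,\cdot)-\mathbf{s}_N^{(\gamma,\delta)}(f,y,\cdot)\big)\rho^{(\eta,\tau)}\big\|_{L^{\alpha}_{[-1,1]}}$ to be small uniformly for $y$ in a neighbourhood of $y_0$, and neither Theorem 1 of \cite{15A} nor the $L^1$ result of \cite{MGN} delivers that uniformity as stated; some quantitative input (e.g.\ a bound on the partial-sum operator together with the pointwise bound (\ref{2.16}) on $p_k^{(\gamma,\delta)}(y)$, or an equicontinuity argument in $y$) would have to be added, and you supply none. So as written the proposal has a genuine gap at its acknowledged crux. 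It is only fair to add that the paper's own proof is no stronger at this very point: its application of Lemma \ref{l2.2} is legitimate only if $f(x,t)-f(y,t)$ can be read as a translation difference in the integrated variable, which is precisely the issue you identified. Your route is the more honest one --- it isolates the real analytic content (joint, locally uniform control of the Jacobi partial-sum operator in the two variables) that a complete proof requires --- but it does not yet supply it, whereas the paper's route reaches the conclusion only by eliding that same difficulty.
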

\begin{proof}
With the help of Lemma \ref{l2.1},
\begin{eqnarray*}
&&P\Bigg(\Big|\int_{-1}^1f(x,t)\rho^{(\eta,\tau)}(t)dX(t,\omega))-
\int_{-1}^1f(y,t)\rho^{(\eta,\tau)}(t)dX(t,\omega)\Big|>\epsilon \Bigg)\\
&\leq&
\frac{C2^{\alpha+1}}{(\alpha+1)\epsilon^{'\alpha}}
\int_{-1}^1\Big|\Big(f(x,t)-f(y,t)\Big) \rho^{(\eta,\tau)}(t)\Big|^\alpha dt,
\end{eqnarray*}
where $0<\epsilon'<\epsilon$ and $p \geq \alpha \in [1,2].$
Since, the weight $\rho^{(\eta,\tau)}(t)$ is bounded,
the integral
\begin{equation*}
\int_{-1}^1\Big|\Big(f(x,t)-f(y,t)\Big)\Big|^\alpha dt
\end{equation*}
tends to $0$ as $y \rightarrow x,$ by Lemma \ref{l2.2}.
This confirms that the sum function (\ref{2.3}) is weakly continuous in probability.
\end{proof}
\subsection{\textbf{Sum function associated with Wiener process}}
\begin{theorem}
The sum function (\ref{2.14}) of the random Fourier--Jacobi series (\ref{2.9}) under the condition on $\gamma,\delta,\eta, \tau$ in Theorem \ref{T2.3} is continuous in quadratic mean.
\end{theorem}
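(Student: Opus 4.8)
The plan is to mirror the proof of Theorem \ref{T2.3.1} for the symmetric stable process, but to replace the probability tail estimate of Lemma \ref{l2.1} by the quadratic-mean isometry (\ref{2.21}) that is special to the Wiener process. Recall that continuity in quadratic mean of the sum function (\ref{2.14}), regarded as a function of $y$, at a point $x \in [0,1]$ means
\begin{equation*}
\lim_{y \rightarrow x} E\Big(\Big|\int_0^1 f(x,t)\sigma^{(\eta,\tau)}(t)dW(t,\omega)-\int_0^1 f(y,t)\sigma^{(\eta,\tau)}(t)dW(t,\omega)\Big|^2\Big)=0.
\end{equation*}
So the first step is to use the linearity of the stochastic integral to collapse the difference of the two integrals into the single integral $\int_0^1 \big(f(x,t)-f(y,t)\big)\sigma^{(\eta,\tau)}(t)dW(t,\omega)$.

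Next I would apply the isometry (\ref{2.21}) with $g(t)=\big(f(x,t)-f(y,t)\big)\sigma^{(\eta,\tau)}(t)$, which converts the quadratic mean of the difference into the deterministic integral
\begin{equation*}
\beta^2\int_0^1 \Big|\big(f(x,t)-f(y,t)\big)\sigma^{(\eta,\tau)}(t)\Big|^2 dt.
\end{equation*}
Since $\eta,\tau \geq 0$, the weight $\sigma^{(\eta,\tau)}(t)$ is bounded on $[0,1]$, say $|\sigma^{(\eta,\tau)}(t)| \leq C$, so this quantity is at most $\beta^2 C^2\int_0^1 |f(x,t)-f(y,t)|^2 dt$. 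The final step is to invoke Lemma \ref{l2.2} with $p=2$ to conclude that this integral tends to $0$ as $y \rightarrow x$, which establishes the continuity in quadratic mean.

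The main obstacle, as in the symmetric stable case of Theorem \ref{T2.3.1}, is the justification that Lemma \ref{l2.2} applies to the difference $f(x,t)-f(y,t)$: this requires that $f$, viewed as a function of its first argument, lies in the class (continuous or $L^2$) to which the lemma applies, so that the shift in the $y$-variable produces an $L^2$-small perturbation uniformly in $t$. The conditions (\ref{2.13}) from Theorem \ref{T2.3} are needed upstream to guarantee that the sum function (\ref{2.14}) is well-defined as the quadratic-mean limit of the partial sums $T_n^{(\gamma,\delta)}(f,y,\omega)$; once that is in place, the isometry computation and the boundedness of the weight are routine, and the entire difficulty is concentrated in the application of Lemma \ref{l2.2}.
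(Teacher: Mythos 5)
Your proposal is correct and follows essentially the same route as the paper's own proof: linearity of the stochastic integral, the isometry (\ref{2.21}), boundedness of $\sigma^{(\eta,\tau)}$ for $\eta,\tau\geq 0$, and then Lemma \ref{l2.2} with $p=2$. Your closing caveat about how Lemma \ref{l2.2} (which concerns translation in the integration variable) is being applied to a shift in the first argument of $f(\cdot,t)$ is a fair observation, but the paper glosses over this in exactly the same way, so there is no substantive difference between the two arguments.
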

\begin{proof}
By the use of (\ref{2.21}),
\begin{eqnarray*}
&&E\Big(\Big|\int_0^1 f(y,t)\sigma^{(\eta,\tau)}(t)dW(t,\omega)- f(x,t)\sigma^{(\eta,\tau)}(t)dW(t,\omega)\Big|^2\Big)\\
&=&E\Big(\Big|\int_0^1 \Big( f(y,t)-f(x,t)\Big)\sigma^{(\eta,\tau)}(t)dW(t,\omega)\Big|^2\Big)\\
&=& \beta^2 \int_0^1  \Big|\Big(f(y,t)-f(x,t)\Big)\sigma^{(\eta,\tau)}(t)\Big|^2dt.
\end{eqnarray*}
 For $\eta,\tau \geq 0,$
the Jacobi weight $\sigma^{(\eta,\tau)}(t)$ is bounded i.e $|\sigma^{(\eta,\tau)}(t)| \leq C,$ for some $c>0.$
Then
\begin{eqnarray*}
&&E\Big(\Big|\int_0^1 f(y,t)\sigma^{(\eta,\tau)}(t)dW(t,\omega)- f(x,t)\sigma^{(\eta,\tau)}(t)dW(t,\omega)\Big|^2\Big)\\
&& \leq \beta^2 C^2 \int_0^1 \Big|\Big(f(y,t)-f(x,t)\Big)\Big|^2dt.
\end{eqnarray*}
Hence by Lemma \ref{l2.2}, the right hand side tends to zero as $y \rightarrow x.$ This proves that the sum function (\ref{2.14}) in Theorem \ref{T2.3} is continuous in quadratic mean.
\end{proof}
The following theorem establishes the improvement of the continuity property of the sum function (\ref{2.14}) from quadratic mean to almost surely.
\begin{theorem}
The sum function (\ref{2.14}) of the random Fourier--Jacobi series (\ref{2.9}) is almost surely continuous, if
\begin{equation}\label{2.22}
\sum_{n=0}^\infty (n^\gamma|b_n|)< \infty,
\end{equation}
in addition to the conditions on $\gamma,\delta,\eta,\tau$ stated in Theorem \ref{T2.3}.
\end{theorem}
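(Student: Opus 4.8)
The plan is to deduce almost sure continuity from almost sure \emph{uniform} convergence of the random series (\ref{2.9}) in the variable $y$. Every partial sum $T_n^{(\gamma,\delta)}(f,y,\omega)=\sum_{k=0}^n b_k B_k(\omega)q_k^{(\gamma,\delta)}(y)$ is a finite linear combination of the polynomials $q_k^{(\gamma,\delta)}(y)$, hence continuous on $[0,1]$; and a uniform limit of continuous functions is continuous. So it is enough to show that, for almost every $\omega$, the series $\sum_n b_n B_n(\omega)q_n^{(\gamma,\delta)}(y)$ converges uniformly on $[0,1]$. The resulting a.s.\ continuous limit then coincides, for each fixed $y$, with the quadratic-mean limit (\ref{2.14}) supplied by Theorem \ref{T2.3} (a subsequence of the partial sums converges almost surely to the quadratic-mean limit), which identifies it as the sum function (\ref{2.14}) and establishes its almost sure continuity.

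To obtain uniform convergence I would run a pathwise Weierstrass $M$-test. Using the uniform polynomial bound (\ref{2.18}) one has $\sup_{y\in[0,1]}\big|b_n B_n(\omega)q_n^{(\gamma,\delta)}(y)\big|\le C\,n^{\gamma}|b_n|\,|B_n(\omega)|$, so it suffices to prove
\begin{equation*}
\sum_{n=0}^\infty n^{\gamma}|b_n|\,|B_n(\omega)| < \infty \quad\text{almost surely}.
\end{equation*}
Because the summands are nonnegative, by Tonelli's theorem this follows once $E\big(\sum_n n^{\gamma}|b_n||B_n|\big)=\sum_n n^{\gamma}|b_n|\,E|B_n|$ is shown to be finite. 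The $B_n(\omega)$ are centred Gaussians (Theorem \ref{L2.2} and the remarks preceding (\ref{2.8})), so $E|B_n|=\sqrt{2/\pi}\,\big(E|B_n|^2\big)^{1/2}$, and everything reduces to controlling the standard deviation of $B_n$ uniformly in $n$.

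The main obstacle is precisely this variance estimate. By the isometry (\ref{2.21}),
\begin{equation*}
E|B_n(\omega)|^2=\beta^2\int_0^1 \big|q_n^{(\gamma,\delta)}(t)\big|^2\big\{\sigma^{(\eta,\tau)}(t)\big\}^2\,dt
=\beta^2\int_0^1 \big|q_n^{(\gamma,\delta)}(t)\big|^2 (1-t)^{2\eta}t^{2\tau}\,dt,
\end{equation*}
and I would show this is bounded \emph{uniformly in $n$} under the hypotheses (\ref{2.13}). The crude estimate $|q_n^{(\gamma,\delta)}|\le Cn^{\gamma}$ is too lossy here (it would only force the stronger requirement $\sum n^{2\gamma}|b_n|<\infty$); instead one must use the sharp pointwise bound (\ref{2.16}), together with its analogue at the endpoint $t=0$, to evaluate the weighted integral. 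A direct endpoint analysis shows this integral stays bounded in $n$ as soon as $2\eta>\gamma-\tfrac12$ and $2\tau>\delta-\tfrac12$, and these are exactly the inequalities guaranteed by (\ref{2.13}). Granting $E|B_n|^2\le K$, we obtain $\sum_n n^{\gamma}|b_n|\,E|B_n|\le C'\sum_n n^{\gamma}|b_n|$, which is finite by the standing assumption (\ref{2.22}). Hence $\sum_n n^{\gamma}|b_n||B_n(\omega)|<\infty$ for a.e.\ $\omega$, the $M$-test yields uniform convergence of (\ref{2.9}) on $[0,1]$ for those $\omega$, and the sum function (\ref{2.14}) is almost surely continuous. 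The only delicate step is the uniform variance bound; everything after it is routine.
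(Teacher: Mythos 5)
Your proposal follows essentially the same route as the paper: a pathwise Weierstrass $M$-test based on the bound $|q_n^{(\gamma,\delta)}(y)|\le Cn^{\gamma}$, reduced via Tonelli to the finiteness of $\sum_n n^{\gamma}|b_n|\,E|B_n(\omega)|$, which is exactly what condition (\ref{2.22}) supplies. The one point where you are more careful than the paper is the uniform control of $E|B_n(\omega)|$: the paper simply asserts that the Gaussian variables $B_n(\omega)$ ``are bounded,'' whereas you correctly reduce this to a uniform-in-$n$ bound on the variance $E|B_n|^2$ and justify it from the sharp estimate (\ref{2.16}) under the hypotheses (\ref{2.13}); your explicit identification of the almost sure limit with the stochastic integral (\ref{2.14}) by passing to a subsequence of the quadratic-mean convergent partial sums is likewise a detail the paper leaves implicit.
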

\begin{proof}
By Weistrass M-test, the series (\ref{2.9}) converges uniformly to a continuous function almost surely, for almost all $y \in [0,1],$
if
\begin{equation*}
\sum_{n=0}^\infty \Big|b_n B_n(\omega)q_n^{(\gamma,\delta)}(y)\Big|<\infty.
\end{equation*}
It is sufficient to show that
\begin{equation*}
\sum_{n=0}^\infty E\Big|b_n B_n(\omega)q_n^{(\gamma,\delta)}(y)\Big|<\infty.
\end{equation*}
Now\\
\begin{eqnarray*}
\sum_{n=0}^\infty E\Big|b_nB_n(\omega)q_n^{(\gamma,\delta)}(y)\Big|
&\leq & K\sum_{n=0}^\infty (|b_n| n^\gamma),\;where \; K \; is \; a\; constant,
\end{eqnarray*}
as $q_n^{(\gamma,\delta)}(y)$ are bounded by $Cn^\gamma$ and  $B_n(\omega)$ are bounded.
If the sum in the right hand side series is finite, then the almost sure continuity of the sum function (\ref{2.14}) is established, for almost all $y \in [0,1].$
\end{proof}
\section*{Remark}
In all our results the weights associated with the Jacobi polynomials are considered to be $\gamma,\delta>-1$ and $\eta,\tau \geq 0.$
The results that we obtained can be summarized as follows:\\
\begin{enumerate}
\item If $X(t,\omega),t \in \mathbb{R}$ is a symmetric stable process of the index $\alpha=1$ and
weights $\gamma,\delta,\eta,\tau$ satisfy
\begin{equation*}
\gamma-\eta \geq 0,\; and \;\delta-\tau \geq 0,
\end{equation*}
then the random Fourier--Jacobi series (\ref{2.1}) converges in probability to the integral (\ref{2.3}).
 \item  If the index $\alpha \in (1,2],$ and
\begin{eqnarray*}
\Big(\Big|\eta- \frac{\gamma}{2} -\frac{1}{2} + \frac{1}{p}\Big|&<& min \Big(\frac{1}{4}, \frac{1}{2}+\frac{1}{2}\gamma\Big),\\
\Big(\Big|\tau-\frac{\delta}{2}-\frac{1}{2} + \frac{1}{p} \Big|&<& min \Big(\frac{1}{4},\frac{1}{2}+\frac{1}{2}\delta\Big),
\end{eqnarray*}
are satisfied by the weights $\gamma,\delta,\eta,\tau,$
then the random Fourier--Jacobi series (\ref{2.1}) converges in probability to the integral (\ref{2.3}).
\item The sum function (\ref{2.3}) is weakly continuous in probability, if $X(t,\omega)$ is the symmetric stable process of index $ \alpha \in [1,2].$
\item  If $X(t,\omega)$ is the Wiener process $W(t,\omega),t\geq 0$ and
\begin{eqnarray*}
\Big(\Big|\eta- \frac{\gamma}{2}\Big|&<& min \Big(\frac{1}{4}, \frac{1}{2}+\frac{1}{2}\gamma \Big),\\
\Big(\Big|\tau-\frac{\delta}{2} \Big|&<& min \Big(\frac{1}{4},\frac{1}{2}+\frac{1}{2}\delta \Big),
\end{eqnarray*}
is satisfied
by $\gamma,\delta,\eta,\tau,$ then
 the random Fourier--Jacobi series (\ref{2.9}) converges in quadratic mean to the integral (\ref{2.14}).
\item The sum function (\ref{2.14}) associated with the Wiener process is continuous in quadratic mean.
\item In addition to the conditions as in (d) on $\gamma,\delta,\eta,\tau,$ if the Fourier--Jacobi coefficients $b_n$ of the function $f \in L_{[0,1]}^{2,(\gamma,\delta)}$ satisfy the strong condition
\begin{equation*}
\sum_{n=0}^\infty {(|b_n|n^{2\gamma})}^2 < \infty,
\end{equation*}
then the random Fourier--Jacobi series (\ref{2.9}) is convergent almost surely to the stochastic integral (\ref{2.14}).
\item The sum function (\ref{2.14}) associated with the Wiener process is almost surely continuous,
if
\begin{equation*}
\sum_{n=0}^\infty |b_n|n^\gamma < \infty.
\end{equation*}
\end{enumerate}

\section{Acknowledgements}
This research work was supported by University Grant Commission (National Fellowship with letter no-F./2015-16/NFO-2015-17-OBC-ORI-33062).

\end{document}